\documentclass[a4paper,12pt]{article}
%Schriftgroesse zwecks Papierersparnis herab gesetzt
\pagestyle{myheadings}

\usepackage[latin1]{inputenc}
\usepackage[T1]{fontenc}
\usepackage[english]{babel}

\usepackage{mathrsfs}
\usepackage{amscd}
\usepackage{amsfonts}
\usepackage{amsmath}
\usepackage{amssymb}
\usepackage{amstext}
\usepackage{amsthm}
\usepackage{amsbsy}

\usepackage{xspace}
\usepackage[all]{xy}
\usepackage{graphicx}
\usepackage{url}
\usepackage{latexsym}

\usepackage{graphicx} % support the \includegraphics command and options

% \usepackage[parfill]{parskip} % Activate to begin paragraphs with an empty line rather than an indent

%%% PACKAGES
\usepackage{booktabs} % for much better looking tables
\usepackage{array} % for better arrays (eg matrices) in maths
\usepackage{paralist} % very flexible & customisable lists (eg. enumerate/itemize, etc.)
\usepackage{verbatim} % adds environment for commenting out blocks of text & for better verbatim
\usepackage{subfig} % make it possible to include more than one captioned figure/table in a single float
% These packages are all incorporated in the memoir class to one degree or another...

%%% HEADERS & FOOTERS
\usepackage{fancyhdr} % This should be set AFTER setting up the page geometry
\pagestyle{fancy} % options: empty , plain , fancy
 % customise the layout...
\lhead{}\chead{}\rhead{}
\lfoot{}\cfoot{\thepage}\rfoot{}

%%% SECTION TITLE APPEARANCE
\usepackage{sectsty}
\allsectionsfont{\sffamily\mdseries\upshape} % (See the fntguide.pdf for font help)
% (This matches ConTeXt defaults)

%%% ToC (table of contents) APPEARANCE
\usepackage[nottoc,notlof,notlot]{tocbibind} % Put the bibliography in the ToC
\usepackage[titles,subfigure]{tocloft} % Alter the style of the Table of Contents

 % No bold!

\usepackage[textwidth=100pt,textsize=footnotesize,bordercolor=white,color=blue!30]{todonotes}
\usepackage{hyperref} 

\makeatletter
\newcommand*{\rom}[1]{\expandafter\@slowromancap\romannumeral #1@}
\makeatother

\theoremstyle{definition}

\newtheorem{fact}{fact}

\newtheorem{thm}[fact]{Theorem}
\newtheorem{lemma}[fact]{Lemma}
\newtheorem{prop}[fact]{Proposition}
\newtheorem{corollary}[fact]{Corollary}
\newtheorem{defini}[fact]{Definition}

%opening
\title{Structures Associated with Real Closed Fields and the Axiom of Choice}
\author{Merlin Carl}
\date{}

\begin{document}

\maketitle

%MSC: (1) 03D65, 03C55, 03C64, 03C60 (2) 03E30, 03E75

\begin{abstract}
An integer part $I$ of a real closed field $K$ is a discretely ordered subring of $K$ with minimal positive element $1$
such that, for every $x\in K$, there is $i\in I$ with $i\leq x<i+1$.
Mourgues and Ressayre showed in \cite{MR} that every real closed field has an integer part. Their construction implicitly uses the axiom of choice.
 We show that $AC$ is actually necessary to obtain the result by constructing a transitive model of $ZF$ which contains a real
closed field without an integer part. Then we analyze some cases where the axiom of choice is not necessary for obtaining an integer part.
On the way, we demonstrate that a class of questions containing the question whether the axiom of choice is necessary for the proof of a certain $ZFC$-theorem
is algorithmically undecidable. We further apply the methods to show that it is independent of $ZF$ whether every real closed field has a value group section and a residue field section.
 This also sheds some light on the possibility to effectivize constructions of integer parts and value group sections which was considered e.g. in \cite{DKKL}
and \cite{KL}.
\end{abstract}

\section{Introduction}

%VORSCHLAG: AN GEEIGNETER STELLE EINE BEMERKUNG ZU `DEFINABLE CHOICE' IN o-MINIMALEN ERWEITERUNGEN VON RCF MACHEN: WARUM NICHT IM WIDERSPRUCH ZU UNSEREN ERGEBNISSEN?
%Z.B. ZU `KEINE UNBESCHRÄNKTEN DISKRETEN TEILMENGEN: WIE SOLL DIE FAMILIE DENN AUSSEHEN, AUS DER GEWÄHLT WIRD? NIMMT MAN Z.B. ZU b\in K das intervall [b-1,b+1], kann
%man natürlich immer b wählen, aber dann ist die gesamtheit der wahlen einfach K selbst...

%model completeness apparently unnecessary. taken out of the assumptions in all versions following 8gnMC

A real closed field ($RCF$) $K$ is a field in which $-1$ is not a sum of squares and every polynomial of odd degree has a root. Equivalently, it is
elementary equivalent to the field of real numbers in the language of rings.
We assume familiarity with the basic notions and theorems connected with $RCF$s and refer the reader to \cite{CK} otherwise.
A field $K$ is formally real if $-1$ is not a sum of squares in $K$. $K$ is orderable if there is a linear ordering $\leq$ of $K$
that respects the addition and multiplication of $K$. If $K$ is formally real, then there is a real closed algebraic field extension $K^{\prime}$ of $K$,
called real closure of $K$, which is unique when $K$ is orderable, in which case we will denote it by $K^{rc}$. The existence of real closures for formally real fields depends on the axiom of choice,
while the existence of real closures for ordered fields is known to follow from $ZF$ alone.
If $K$ is a real closed field, $X\subseteq K$ and $K_{X}$ is the smallest subfield of $K$ containing $X$ as a subset,
then $K_{X}^{rc}$ is also a subfield of $K$.

\begin{defini}
Let $K$ be an $RCF$. Then $I\subseteq K$ is an integer part of $K$ iff $I$ is a discretely ordered subring of $K$ such that $1$ is the minimal positive element
of $I$ and, for every $x\in K$, there is $i\in I$ with $i\leq x<i+1$.
\end{defini}

The idea here is that $I$ is in a relation to $K$ similar to the relation of $\mathbb{N}$ and $\mathbb{R}$.
Integer parts of real closed fields are especially interesting as they are known to coincide with models of a certain natural fragment of Peano Arithmetic,
namely Open Induction (see \cite{S}).\\

In \cite{MR}, Mourgues and Ressayre showed that every real closed field has an integer part. Their construction uses the axiom of choice (in the form of Zorn's lemma) implicitly at
least in the proof of the crucial Corollary $4.2$. There has recently been some interest in the complexity of such a construction, see e.g. \cite{DKL} or \cite{DKKL}.
For this purpose, a well-ordering of the real closed field is assumed to be given. This motivates us to ask whether this ingredient is actually necessary, i.e.
whether there is a way to `construct' an integer part from the real closed field alone. Furthermore, there are other structures associated with real closed fields
that are used in the the Mourgues-Ressayre construction, namely value group sections and residue field sections. The usual arguments for their existence
uses Zorn's Lemma, and the question whether their construction can be effectivized has been studied in the countable case in \cite{KL}. Here we ask the same question:
Is the axiom of choice necessary to prove the existence of value group sections and residue field sections for real closed fields?\\

As we show in section $3$ as a side note, such questions are in general difficult to answer: We observe a general theorem that in particular implies that the question
whether a given $ZFC$-theorem can be proved in $ZF$ alone is not algorithmically decidable.\\

Concerning the theorems mentioned above, the axiom of choice turns out to be indeed necessary: In section $4$, we construct
transitive models of Zermelo-Fraenkel
set theory without the axiom of choice ($ZF$) containing a real closed field $K$, but no integer part of $K$. Letting $\phi_{IP}$ denote
the statement that every real closed field has an integer part, this shows that $\phi_{IP}$ is independent of $ZF$. In section $5$, we give some extra conditions on real closed fields
under which the axiom of choice is not necessary for obtaining an integer part. Furthermore, we construct models of $ZF$ containing real closed fields without
value group sections and residue field sections. In section $4.2$, we lay bare the combinatorial core of these constructions, proving two rather general statements about
subsets of models of $o$-minimal theories that provably exist under $ZF$. In section $6$, we obtain as a consequence that the existence of integer parts,
value group sections and residue field sections is indeed highly non-constructive: To be precise, we show that none of them is realizable via primitive recursive set functions.
In section $7$, we give an upper estimate on the amount of choice necessary to show that a real closed field possesses these substructures, showing in $ZF$ that
well-orderable real closed fields have value group sections. Section $8$ then briefly discusses some open questions and topics for 
further work.

\section{Preliminaries}

We briefly summarize some notions from set theory that are necessary to understand the central tool for
constructing a choice-free universe with an $RCF$ without an integer part, namely Lemma \ref{subsetremoval} below, originally
used by Hodges to show the dependence of several algebraic constructions on the axiom of choice (\cite{H1}).
$ZF$ is Zermelo-Fraenkel set theory, $ZFC$ is $ZF$ together with the axiom of choice, see e.g. Chapter $1$ of \cite{J1}.
A model of $ZF(C)$ is transitive iff $x\in M$ whenever $y\in M$ and $x\in y$. The rough idea is to start with a given countable
transitive model $M$ of $ZFC$ (the existence of such models is known to be consistent with $ZFC$ unless $ZFC$ itself is not)
containing a certain algebraic object $A$ and then build a model $N(A)$ of $ZF$ that contains an isomorphic copy $A^{\prime}$ of $A$ (i.e. in the real world $V$,
there is an isomorphism between $A$ and $A^{\prime}$ - for our purposes, we can identify $A$ and $A^{\prime}$), but only those subsets of $A$
that are respected by all automorphisms that fix some finite subset of $A$. It is important to note that `all automorphisms' here is to be understood
with respect to the set-theoretical universe $V$, in particular not relativized to the model $N(A)$.\\
Most of our notation is standard. If $f$ is a map and $X$ is a subset of its domain, then we denote by $f[X]$ the image of $X$ under $f$;
if $\vec{a}=(a_1,...,a_n)$ and $\vec{b}=(b_1,...,b_n)$ are finite subsets of the domain and the range of $f$, respectively, then
we write $f(\vec{a})=\vec{b}$ to mean that $f(a_{i})=b_{i}$ for all $i\in\{1,...,n\}$. When we write ($ZF$) before a theorem statement,
we mean that the statement is provable in $ZF$ alone, i.e. without the axiom of choice. When $\mathfrak{A}$ is a structure and $\vec{x}\subseteq\mathfrak{A}$ is
a finite sequence of elements of $\mathfrak{A}$,
then $\text{tp}_{\mathfrak{M}}(\vec{x})$ denotes the type of $\vec{x}$ in $\mathfrak{A}$, where the subscript $\mathfrak{M}$ is dropped when the structure is clear
from the context. For a set $X$ and a transitive $S\subseteq M\models ZF$, $\mathfrak{P}^{M}(X)$ denotes the power set of $X$ in $M$,
i.e. the set of subsets of $X$ contained in $M$.

\begin{defini}{\label{Msymmetric}}
 Let $M\models ZF$, and let $R\in M$ be a ring. Then $R$ is $M$-symmetric iff, for every $X\in\mathfrak{P}^{M}(R)$,
there is a finite $s_{X}\subseteq R$ such that $\pi[X]=X$ for every automorphism $\pi$ of $R$ which fixes $s_{X}$ pointwise, i.e.
$\pi_{X}(a)=a$ for every $a\in s$. In this case, $s$ is called a support for $X$.
\end{defini}

\textbf{Remark}: Note that, if $R$ is an $RCF$, then $\pi$ must preserve the canonical ordering of $R$ so that for finite $s$,
$\pi$ fixes $s$ pointwise iff $\pi(s)=s$. 

\begin{lemma}{\label{subsetremoval}}
 Let $L$ be a countable first-order language and $\mathfrak{A}$ be a countable $L$-structure. Then there is a transitive model $N(\mathfrak{A})$ of $ZF$
which contains an $N(\mathfrak{A})$-symmetric isomorphic copy of $\mathfrak{A}$.
\end{lemma}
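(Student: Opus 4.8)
The plan is to build $N(\mathfrak{A})$ as a symmetric submodel (permutation model analogue) of a forcing extension, following the classical Fraenkel--Mostowski--Specker technique adapted to $ZF$ via forcing, exactly as in Hodges's argument from \cite{H1}. First I would start with a countable transitive model $M \models ZFC$ (such $M$ exists assuming $\mathrm{Con}(ZFC)$, which is the standing hypothesis of this section), and without loss of generality assume $\mathfrak{A} \in M$, since $\mathfrak{A}$ is a countable structure in a countable language and hence can be coded as a real, which we may take to lie in $M$ (or, more carefully, we replace $\mathfrak{A}$ by an isomorphic copy living in $M$ — the excerpt already notes we identify $A$ and $A'$). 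The idea is to adjoin, by forcing, a generic ``cloud'' of new elements that makes the automorphism group of $\mathfrak{A}$ rich, and then pass to the submodel consisting of sets that are hereditarily supported by finite subsets of (the new copy of) $\mathfrak{A}$.

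Here are the key steps in order. (1) Fix the forcing $\mathbb{P}$ whose conditions are finite partial injections from $\omega \times A$ into $A$ (equivalently, finite partial permutations), ordered by reverse inclusion; a generic $G$ yields, for the copy $A$ of the underlying set of $\mathfrak{A}$, countably many ``indiscernible'' generic enumerations, so that in $M[G]$ the group of automorphisms of the $L$-structure $\mathfrak{A}$ (now carrying its interpreted relations and functions transported along $G$) acts with the property that every tuple is moved around as much as the quantifier-free type allows. (2) Let $\mathcal{G}$ be the group of all automorphisms of $\mathbb{P}$ induced by permutations of $\omega$ (permuting the ``columns''), acting on $\mathbb{P}$-names in the standard way, and let $\mathcal{F}$ be the filter of subgroups generated by the pointwise stabilizers of finite subsets of $A$. (3) Define $N(\mathfrak{A})$ to be the class $HS$ of hereditarily symmetric names evaluated at $G$: a name is symmetric if its stabilizer lies in $\mathcal{F}$, and hereditarily symmetric if this holds hereditarily. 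Standard symmetric-model theory then gives $N(\mathfrak{A}) \models ZF$ and $N(\mathfrak{A})$ transitive, with $M \cap \mathrm{Ord} = N(\mathfrak{A}) \cap \mathrm{Ord}$ and $\mathfrak{A} \in N(\mathfrak{A})$. (4) Finally, verify $N(\mathfrak{A})$-symmetry of $\mathfrak{A}$: if $X \in \mathfrak{P}^{N(\mathfrak{A})}(\mathfrak{A})$, pick a hereditarily symmetric name $\dot X$ for it with stabilizer containing the pointwise stabilizer of some finite $s_X \subseteq A$; then any automorphism $\pi$ of $\mathfrak{A}$ in $V$ fixing $s_X$ pointwise extends (using genericity and the homogeneity of $\mathbb{P}$) to a symmetry in $\mathcal{G}$ that fixes $\dot X$, whence $\pi[X] = X$. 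Note that one must take $\pi$ ranging over automorphisms in the true universe $V$, not merely in $N(\mathfrak{A})$; this is handled because any such $\pi$, being determined by its action on the countable set $A$, is approximated by finite partial automorphisms that are realized by conditions in $G$, so the relevant invariance transfers.

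The main obstacle I expect is step (4): one has to ensure that the finite support $s_X$ that witnesses symmetry of the \emph{name} $\dot X$ inside the forcing apparatus also witnesses the \emph{external} invariance property demanded by Definition \ref{Msymmetric}, namely invariance under all automorphisms of $\mathfrak{A}$ existing in $V$. The point is that the forcing must be set up so that the generic object ``absorbs'' every $V$-automorphism fixing a finite set — concretely, one wants that for each finite partial automorphism $p$ of $\mathfrak{A}$ fixing $s_X$, the set of conditions forcing a statement about $\dot X$ is invariant under the $\mathcal{G}$-action coming from extending $p$. This is exactly the role played by using finite partial \emph{injections/permutations} as conditions (so that the forcing is sufficiently homogeneous) together with the fact that in an $\omega$-saturated or at least a suitably homogeneous setting, finite partial elementary maps can be amalgamated; for a general countable $\mathfrak{A}$ one instead works with the structure $(\mathfrak{A}, \mathrm{all\ automorphisms})$ directly and uses that $\mathrm{Aut}(\mathfrak{A})$ acting on finite tuples has orbits that are definable enough to be handled by finite conditions. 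Once the homogeneity lemma for $\mathbb{P}$ is in hand, the verification is routine bookkeeping with the symmetry lemma for forcing.
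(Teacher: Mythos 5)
The paper does not actually prove this lemma: its ``proof'' is a citation to Hodges (Lemma $3$ of \cite{H1}, Lemma $3.7$ of \cite{H2}), so your sketch can only be measured against Hodges's construction. Your overall strategy --- fix a countable transitive $M\models ZFC$ containing a copy of $\mathfrak{A}$, force, and cut down to the hereditarily symmetric names --- is the right one, but step (2) is wrong as written, and the error is not cosmetic. If $\mathcal{G}$ consists of the automorphisms of $\mathbb{P}$ induced by permutations of $\omega$ (``permuting the columns''), these symmetries never move the $A$-coordinate, so the pointwise stabilizer of a finite subset of $A$ is all of $\mathcal{G}$, the filter $\mathcal{F}$ degenerates, and the resulting model records nothing about $\mathrm{Aut}(\mathfrak{A})$. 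You have conflated the Cohen-style symmetric model (permutations of $\omega$, used to make the set of added reals non-wellorderable) with the Fraenkel--Mostowski-style transfer that is needed here: the group must be induced by $\mathrm{Aut}(\mathfrak{A})$ acting on the $A$-coordinate of the conditions, and $\mathcal{F}$ must be generated by the stabilizers $\{\pi\in\mathrm{Aut}(\mathfrak{A}):\pi\upharpoonright s=\mathrm{id}\}$ for finite $s\subseteq A$. Relatedly, forcing with finite partial injections from $\omega\times A$ \emph{into} $A$ adds nothing; one wants injections of $A$ (or $A\times\omega$) into fresh generic objects, so that the copy $\mathfrak{A}'$ has a generic underlying set on which $\mathrm{Aut}(\mathfrak{A})$ can act by permuting names.

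Your step (4) correctly isolates the main obstacle --- Definition \ref{Msymmetric}, by the paper's explicit warning, quantifies over all automorphisms in $V$, whereas symmetry of the name $\dot X$ only yields invariance under automorphisms available in $M$ --- but the proposed fix does not close the gap. Conditions in the generic filter describe the generic labelling, not automorphisms of $\mathfrak{A}$, and a finite partial automorphism that extends to a full automorphism in $V$ does not obviously extend to one in $M$. The standard repair is an absoluteness argument: for countable $\mathfrak{A}\in M$ and finite $s$, the statement ``there is an automorphism of $\mathfrak{A}$ fixing $s$ pointwise and sending $b$ to $c$'' is $\Sigma^{1}_{1}$ (equivalently, witnessed by a back-and-forth system of finite partial maps), hence absolute between the transitive model $M$ and $V$. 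Therefore the orbit relation over $s$ is the same whether computed in $M$ or in $V$, and invariance of $X$ under all $M$-automorphisms fixing $s$ already gives invariance under all $V$-automorphisms fixing $s$. Without identifying this mechanism (or some substitute for it), the lemma in the strong form the paper needs is not established by your argument.
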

\begin{proof}
 This is Lemma $3$ of \cite{H1} and also Lemma $3.7$ of \cite{H2}, where it is proved.
\end{proof}

That $N(\mathfrak{A})$ contains an isomorphic copy $\mathfrak{A}^{\prime}$ of $\mathfrak{A}$ is in general not enough to show an independence from $ZF$ - one
must also know that the relevant properties of $\mathfrak{A}$ still hold for $\mathfrak{A}^{\prime}$ in $N(\mathfrak{A})$. Some properties, as e.g. 
countability, will in general not be preserved. However, every notion relevant for our purposes is absolute between transitive models of $ZF$ (see section $4$ for the definitions
of a value group section and a residue field section):

\begin{lemma}{\label{absoluteness}}
Let $K$ and $R$ be sets. The following statements are absolute between all transitive models of $ZF$ containing $K$ (or $K$ and $R$, where relevant):
\begin{itemize}
\item (a) $K$ is a real closed field
\item (b) $R$ is a subring of $K$
\item (c) $R$ is a dense subset of $K$
\item (d) $R$ is an integer part of $K$
\item (e) $R$ is a value group section of $K$
\item (f) $R$ is a residue field section of $K$
\end{itemize}
\end{lemma}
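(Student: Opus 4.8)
The plan is to prove each clause by exhibiting the defining property as a first-order (or bounded-quantifier) condition over the parameters $K$ (and $R$), and then invoke the standard absoluteness machinery for transitive models of $ZF$: bounded ($\Delta_0$) formulas are absolute, and a $\Pi_1$ statement whose universal quantifier ranges over a set that is itself absolute (e.g.\ elements of $K$, finite tuples from $K$, polynomials over $K$) is absolute as well, since the witnessing set does not change. Throughout I would use that basic set-theoretic operations --- ordered pairs, finite sequences, the arithmetic operations of $K$ coded as sets, the ordering of $K$ --- are absolute, so that phrases like ``$x+y$'', ``$x\cdot y$'', ``$x\le y$'' computed inside a transitive model agree with the real world.

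For clause (a): being a real closed field can be written as ``$K$ is an ordered field'' (a conjunction of $\Delta_0$ axioms about the graphs of $+$, $\cdot$, $\le$, all quantifiers bounded by $K$) together with ``every positive element has a square root in $K$'' and ``every polynomial over $K$ of odd degree has a root in $K$''. The set of polynomials over $K$ of a given degree is absolute (it is coded inside $V_{\omega}\times K^{<\omega}$-style hereditarily-$K$ material), so these are $\Pi_1$ conditions with an absolute range; hence (a) is absolute. Clause (b) is purely $\Delta_0$: $R\subseteq K$, $R$ closed under the operations inherited from $K$, contains $0,1$. For clause (c), density of $R$ in $K$ says: for all $x<y$ in $K$ there is $r\in R$ with $x<r<y$; this is $\Pi_1$ with the inner existential quantifier bounded by $R\subseteq K$, hence absolute. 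Clause (d), integer part, combines (b) with: $R$ is discretely ordered with least positive element $1$ (a $\Delta_0$ statement, or at worst $\Pi_1$ over $R$), and the key clause ``for every $x\in K$ there is $i\in R$ with $i\le x<i+1$'', which is again $\Pi_1$ with a bounded inner quantifier, so it is absolute.

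Clauses (e) and (f) are handled the same way once the definitions (recalled in Section~4) are unwound: a value group section is a subgroup $R$ of the multiplicative group $K^{>0}$ that maps isomorphically onto the value group of the natural valuation on $K$, and a residue field section is a subfield mapping isomorphically onto the residue field. The natural valuation, the valuation ring, its maximal ideal, the value group and the residue field are all definable from $K$ by $\Delta_0$ or $\Pi_1$ conditions (``$v(x)\le v(y)$'' iff ``$x/y$ is bounded by some element of the prime field'' --- expressible with quantifiers bounded by $K$ and $\N$, both absolute); the section conditions then say that $R$ meets each relevant class in exactly one point, which is $\Pi_1$ over $K$. So (e) and (f) are absolute too.

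I expect the main --- though still routine --- obstacle to be bookkeeping the quantifier complexity: one must be careful that auxiliary objects invoked in the definitions (polynomials of a given degree, the natural valuation and its residue structure, finite tuples) are genuinely absolute and not merely definable, since a $\Pi_1$ statement is only absolute when the domain of its universal quantifier does not grow or shrink between models. Once it is checked that all these auxiliary sets are absolute (they are, being constructed by absolute operations from $K$ and $\omega$), each clause reduces to a $\Pi_1$ assertion over an absolute domain, and the result follows from the Lévy absoluteness for such formulas between transitive models of $ZF$. No choice is used anywhere in this verification, which is the point.
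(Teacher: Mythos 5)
Your proposal is correct and follows essentially the same route as the paper: each clause is reduced to a formula whose quantifiers are bounded by $K$, $R$, or an auxiliary set (finite sequences over $\omega\cup K$ coding polynomials) that is itself absolutely constructible from $K$ in any transitive model of $ZF$, after which $\Delta_0$-absoluteness between transitive classes finishes the argument. The only cosmetic differences are your use of the square-root axiom in place of the paper's ``$-1$ is not a sum of squares'' (equivalent axiomatizations, equally absolute) and the label ``L\'evy absoluteness'' for what is really just $\Delta_0$-absoluteness with absolute parameters.
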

\begin{proof}
Recall that a formula in the language of set theory is $\Delta_{0}$ if all of its quantifiers are bounded.
By Lemma $12.9$ of \cite{J1}, $\Delta_{0}$-formulas are absolute between transitive classes. It is
easy to see that (b)-(f) are expressable with quantifiers restricted to $K$ and $R$, so that
(b)-(f) are in fact absolute between arbitrary transitive classes containing $K$ and $R$.\\
It remains to see that (a) is $\Delta_{0}$-expressable in a transitive model of $ZF$. 
This is obvious for the axioms of ordered fields. We need to say that $-1$ is not a sum of squares in $K$
and that every polynomial of odd degree with coefficients in $K$ has a root in $K$. Denoting by $^{<\omega}X$
the set of finite sequences of elements of a set $X$, this is easily expressable with quantifiers
bounded by $^{<\omega}(\omega\cup K)$. But $^{<\omega}(\omega\cup K)$ exists and is absolute for
every transitive model of $ZF$ containing $K$.
\end{proof}

\textbf{Remark}: In order for some of our statements to make sense, we note that the value group and the residue field of a real closed field $K$, being definable over $K$, exist in plain $ZF$.
Hence e.g. a model of $ZF$ with an $RCF$ $K$ without a value group section will contain $K$ as well as its value group $\mathcal{O}$, but no embedding from $\mathcal{O}$ into $K$.

\section{A Remark on the Decidability of the Necessity of Axioms}

We take the opportunity to remark that there is no general procedure to decide whether or not the axiom of choice is needed in the proof of a certain $ZFC$-theorem.
In fact, there are various places in mathematical logic where one is concerned with the necessity of certain axioms for the proof of a theorem; in set theory, typical questions are about
the necessity of the axiom of choice and large cardinal assumptions. In arithmetic, one is interested in the minimal degree of induction necessary for the proof of some statement.
Generally, it is hard to determine the answer. This suggests that these problems may be undecidable. We show that this is indeed the case for all the cases mentioned and in fact many more.
The proof is quite easy; to the best of our knowledge, however, this has so far not been noted or written down. 

\begin{thm}{\label{decidingweakertheories}}
Let $T$ be a first-order theory and $\phi$ a statement such that $T+\neg\phi$ is undecidable. Then there is no effective procedure to decide whether, given a proof in $T+\phi$ of some statement $\psi$,
the statement $\psi$ is provable in $T$; that is, there is no program $P$ such that $P$, when run on the G\"odel code of a $T+\phi$-provable formula $\psi$, stops with output $1$ iff $\psi$
is provable in $T$, stops with output $0$ iff $\psi$ is not provable in $T$ and does not stop when run on the G\"odel code of a formula not provable in $T+\phi$.
\end{thm}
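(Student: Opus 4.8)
The plan is to reduce the undecidability of the proof-classification problem to the undecidability of $T+\neg\phi$. Suppose toward a contradiction that such a program $P$ exists. I would first observe that for a statement $\psi$ which is provable in $T+\phi$, the following equivalences hold: $\psi$ is provable in $T$ if and only if $\psi$ is provable in $T+\neg\phi$ as well (since $T+\phi$ already proves $\psi$, and $T$ is the common part — more precisely, $T\vdash\psi$ implies $T+\neg\phi\vdash\psi$ trivially, while conversely if both $T+\phi\vdash\psi$ and $T+\neg\phi\vdash\psi$ then by the deduction theorem and reasoning by cases $T\vdash\psi$). So for inputs $\psi$ that are $T+\phi$-provable, the program $P$ is exactly deciding membership in the set of $T+\neg\phi$-provable sentences.

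The remaining step is to get an arbitrary instance of the $T+\neg\phi$ decision problem into this restricted form. Given an arbitrary sentence $\chi$, I would consider the sentence $\psi_\chi := \phi\rightarrow\chi$. Then $\psi_\chi$ is always provable in $T+\phi$ (it follows from $\chi$ under the hypothesis $\phi$... more carefully: $T+\phi\vdash\chi\leftrightarrow(\phi\rightarrow\chi)$, so $T+\phi\vdash\psi_\chi$ iff $T+\phi\vdash\chi$). Hmm, that is not automatically a theorem of $T+\phi$. Let me instead take $\psi_\chi := \phi\vee\chi$, which is provable in $T+\phi$ outright. Now I claim $T\vdash\phi\vee\chi$ iff $T+\neg\phi\vdash\chi$: if $T\vdash\phi\vee\chi$ then adding $\neg\phi$ gives $T+\neg\phi\vdash\chi$; conversely if $T+\neg\phi\vdash\chi$ then by the deduction theorem $T\vdash\neg\phi\rightarrow\chi$, i.e. $T\vdash\phi\vee\chi$. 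Thus feeding the G\"odel code of $\phi\vee\chi$ to $P$ decides whether $\chi$ is provable in $T+\neg\phi$; since $\phi\vee\chi$ is always $T+\phi$-provable, $P$ is guaranteed to halt with the correct answer. This contradicts the undecidability of $T+\neg\phi$, completing the proof.

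The main obstacle, and the point requiring care, is the correctness of the reduction sentence: one must pick $\psi_\chi$ so that (i) it is a theorem of $T+\phi$ for every $\chi$, guaranteeing that $P$ halts on its code, and (ii) its $T$-provability is equivalent to the $T+\neg\phi$-provability of $\chi$. The choice $\psi_\chi = \phi\vee\chi$ meets both, using only the deduction theorem and elementary propositional reasoning; one should also note the construction $\chi\mapsto\ulcorner\phi\vee\chi\urcorner$ is primitive recursive on G\"odel codes, so composing it with $P$ yields a genuine decision procedure. Everything else is routine, and no appeal to the specific content of $T$ or $\phi$ is needed beyond $T$ being a first-order theory (with an effective axiomatization implicit in the notion of ``provable'' being semidecidable, which is what makes the final contradiction meaningful).
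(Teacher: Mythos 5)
Your proposal is correct and uses essentially the same reduction as the paper: the map $\chi\mapsto\phi\vee\chi$ produces a sentence that is always $T+\phi$-provable and is $T$-provable exactly when $\chi$ is $T+\neg\phi$-provable, so $P$ would decide $T+\neg\phi$. The extra remarks (the case analysis in your first paragraph and the primitive recursiveness of the coding map) are harmless but not needed beyond what the paper already records.
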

\begin{proof}
Let $T$ and $\phi$ be as in the assumptions of the theorem. Assume for a contradiction that $P$ is a program as described and let $\psi$ be an arbitrary formula.
Clearly, $T+\phi$ proves $\psi$ iff $T$ proves $\phi\rightarrow\psi$. Now 
%$\phi\rightarrow(\phi\vee\psi)$ 
$\phi\vee\psi$ is clearly provable in $T+\phi$ for every
$\psi$. Hence, we can use $P$ to decide whether for a given formula $\psi$, the formula $\phi\vee\psi$ is provable in $T$. But $\phi\vee\psi$ is provable in $T$ iff
$\psi$ is provable in $T+\neg\phi$. Thus $P$ can be used to decide $T+\neg\phi$, which contradicts the assumption that $T+\neg\phi$ is undecidable.
\end{proof}

\textbf{Remark}: This result can also be obtained as a consequence to a theorem of Ehrenfeucht and Mycielski which states that, if 
$T+\neg\alpha$ is undecidable, then there is no recursive function $f$ such that $W_{T}(\phi)\leq f(W_{T+\alpha}(\phi))$ holds for all theorems $\phi$ of $T$; here, $W_{S}$
is a measure for the complexity of the shortest proof of formula in the theory $S$. For a precise notion of a complexity measure and a proof, see \cite{EM}.

We note some particularly interesting special cases. (We assume that $PA$ and $ZFC$ are consistent.)

\begin{corollary}
\begin{enumerate}
 \item There is no effective procedure to decide whether a $ZFC$-theorem is provable in $ZF$ alone.
 \item Assuming the consistency of some large cardinal hypothesis $H$, there is no effective procedure to decide whether a $ZFC+H$-theorem is provable in $ZFC$ alone
 \item There is no effective procedure that maps PA-theorems $\phi$ to the degree of induction necessary for their proof, i.e. the smallest $n$ such that $I\Sigma_{n}\vdash\phi$
%To formulate this result, we pick an effective enumeration $(\phi_{i}|i\in\omega)$ of the $T$-theorems. (Such an enumeration can e.g. be obtained
%by letting $\phi_{n}$ be the statement proved by the $n$th proof in some effective enumeration of the $T$-proofs.)
\end{enumerate}
\end{corollary}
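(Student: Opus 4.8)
The plan is to derive all three items as instances of Theorem~\ref{decidingweakertheories}, the only work being to choose, for each, a pair $(T,\phi)$ for which $T+\neg\phi$ is undecidable and then to convert a hypothetical decision (respectively degree-computing) procedure into a program of the form the theorem excludes, thereby reaching a contradiction. In each case the conversion uses a uniform trick: given a procedure $D$ that is only promised to behave correctly on $T+\phi$-theorems, define $P$ to first search for a proof of its input $\psi$ in $T+\phi$ and, only once such a proof has been found, to call $D(\psi)$ and return its answer. Then $P$ halts with the correct answer on every $T+\phi$-theorem and diverges on every formula not provable in $T+\phi$, exactly as forbidden by Theorem~\ref{decidingweakertheories}. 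For item~3 one must additionally observe that a procedure computing the least $n$ with $I\Sigma_n\vdash\psi$ yields, for $\psi$ known to be a $PA$-theorem, a decision of whether $I\Sigma_1\vdash\psi$ (namely, of whether that least $n$ is $\le 1$).

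For item~1 take $T=ZF$ and $\phi=AC$. From the assumed $\mathrm{Con}(ZFC)$ we get $\mathrm{Con}(ZF)$, and by Cohen's independence theorem $\mathrm{Con}(ZF)$ implies $\mathrm{Con}(ZF+\neg AC)$; since $ZF+\neg AC$ is recursively axiomatized and interprets Robinson's arithmetic $Q$ (via the finite von Neumann ordinals), the Tarski--Mostowski--Robinson essential undecidability of $Q$ makes $ZF+\neg AC$ undecidable, so Theorem~\ref{decidingweakertheories} applies. Item~2 is the same argument with $T=ZFC$ and $\phi=H$: here one needs that $\mathrm{Con}(ZFC+H)$ entails $\mathrm{Con}(ZFC+\neg H)$, which holds for the usual large cardinal hypotheses because the least witness $\kappa$ of $H$ in a model of $ZFC+H$ already provides a rank initial segment $V_\kappa$ satisfying $ZFC+\neg H$ (equivalently: such $H$ prove $\mathrm{Con}(ZFC)$, so $ZFC\nvdash H$ and hence $ZFC+\neg H$ is consistent), and $ZFC+\neg H$ is then again a consistent, recursively axiomatized extension of $Q$, hence undecidable. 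For item~3 take $T=I\Sigma_1$ and $\phi=\mathrm{Con}(I\Sigma_1)$: then $PA\vdash\phi$ (since $PA$ proves the consistency of each $I\Sigma_n$), whereas $I\Sigma_1\nvdash\phi$ by G\"odel's second incompleteness theorem (using $\mathrm{Con}(PA)$), so $T+\neg\phi=I\Sigma_1+\neg\mathrm{Con}(I\Sigma_1)$ is consistent, recursively axiomatized and extends $Q$, hence undecidable; moreover $PA\vdash\phi$ together with $I\Sigma_1\subseteq PA$ guarantees that every $T+\phi$-theorem is a $PA$-theorem, so the decision procedure obtained in the first paragraph is legitimately invoked only on $PA$-theorems.

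The argument is short, and the one place that requires genuine care is the undecidability of $T+\neg\phi$ in each case: item~1 rests on Cohen's theorem, item~3 on the provability facts for the $I\Sigma_n$-hierarchy noted above, and item~2 on the set-theoretic folklore that the consistency of a standard large cardinal axiom $H$ over $ZFC$ implies the consistency of its negation (so that $ZFC+\neg H$ is a bona fide consistent, recursively axiomatized theory to which the theorem can be applied). Everything else --- the interpretation of $Q$ together with essential undecidability, and the proof-search gate that forces the constructed program $P$ to diverge off the set of $T+\phi$-theorems --- is routine.
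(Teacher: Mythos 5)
Your proposal is correct and follows the paper's overall route exactly: all three items are obtained by exhibiting a pair $(T,\phi)$ with $T+\neg\phi$ undecidable and feeding it to Theorem~\ref{decidingweakertheories}, with the same choices $T=ZF$, $\phi=AC$ for item~1 and $T=ZFC$, $\phi=H$ for item~2 (the paper compresses these into the single remark that every consistent recursive extension of $ZF$ is undecidable; your explicit appeals to Cohen's theorem, to $H\vdash\mathrm{Con}(ZFC)$, and to essential undecidability of $Q$ just fill in what is left implicit, as does your proof-search gate ensuring divergence off the set of $T+\phi$-theorems). The one genuine divergence is item~3: the paper takes $T=I\Sigma_1=PA^-+\phi_1$ and $\phi=\phi_2$, where $\phi_n$ is the $\Sigma_n$-induction scheme packaged as a single sentence via bounded truth predicates, and relies on the strictness $I\Sigma_1\nvdash\phi_2$ of the induction hierarchy; you instead take $\phi=\mathrm{Con}(I\Sigma_1)$ and get consistency of $T+\neg\phi$ directly from G\"odel's second incompleteness theorem. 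Your variant is slightly more self-contained (no finite axiomatizability of $I\Sigma_2$ and no hierarchy-strictness theorem needed), while the paper's choice has the mild aesthetic advantage that $T+\phi$ is literally $I\Sigma_2$, so the reduction reads as ``decide whether an $I\Sigma_2$-theorem is already an $I\Sigma_1$-theorem''; both correctly convert the hypothetical degree-computing procedure into a decision of whether the least $n$ is $\le 1$, i.e.\ of $I\Sigma_1$-provability, on a class of inputs that are guaranteed to be $PA$-theorems.
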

\begin{proof}
For (1) and (2), this follows from the observation that every consistent recursive extension of $ZF$ is undecidable.\\
Concerning (3), assume for a contradiction that $P$ is a program that, given a (code for) a $PA$-theorem $\psi$, outputs the smallest $n$ such that $\psi$ is provable in $I\Sigma_{n}$ (and does not halt when
the input is not provable in $PA$). We use the definability of bounded truth predicates in arithmetic to write the induction axioms for $\Sigma_n$ formulas as a single formula $\phi_{n}$ for each $n\in\omega$.
Now, every recursive consistent extension of $I\Sigma_{1}$ (i.e. $PA^{-}+\phi_{1}$) is undecidable. Since $\phi_{2}$ is not implied by $I\Sigma_{1}$, $PA^{-}+\phi_{1}+\neg\phi_{2}$
is consistent and hence undecidable. But, as $I\Sigma_{2}$-theorems are $PA$-theorems, $P$ would allow us to decide whether some theorem $\psi$ of $I\Sigma_{2}$ is provable in $I\Sigma_{1}$, a contradiction to Theorem \ref{decidingweakertheories}.
\end{proof}

\section{Real Closed Fields without Integer Parts}

We now construct a transitive $M\models ZF$ such that, for some $K\in M$, $M\models$`$K$ is a real closed field'$\wedge$`$K$ has no integer part'. Our method is that used by 
Hodges in \cite{H1} and \cite{H2} to construct choicefree counterexamples to some algebraic theorems.

\begin{defini}
If $K$ is a real closed field, then $X\subseteq K$ is bounded in $K$ iff there is $y\in K$ such that $y>x$ for every $x\in X$.
 An $RCF$ $K$ is unbounded iff for every finite $\vec{a}\subseteq K$, the real closure $\text{RC}(\vec{a})$ of $\vec{a}$ in $K$ is bounded in $K$.
$K$ is $\omega$-homogenous iff, for all finite $\vec{a},\vec{b}\subseteq K$, 
$\text{tp}(\vec{a})=\text{tp}(\vec{b})$ implies that for every $c\in K$, there is $d\in K$
such that $\text{tp}(\vec{a},c)=\text{tp}(\vec{b},d)$.
\end{defini}

The point behind the following lemma is that, if $K$ is an $RCF$, $\vec{a}\subseteq K$ finite and $x\in K$, then $\text{tp}(x,\vec{a})$
only depends on the place of $x$ in the ordering of $\text{RC}(\vec{a})$. For a direct proof of this, see Lemma $5.4.3$ of \cite{CK}.

\begin{lemma}{\label{closetypes}}
Let $K$ be an $RCF$, $a\subseteq K$ finite such that $\text{RC}(a)$ is bounded in $K$, $x>\text{RC}(a)$. 
Then there is $\varepsilon>0$ in $K$ such that $\text{tp}(a,x)=\text{tp}(a,y)$ for all $y\in(x-\varepsilon,x+\varepsilon)$.\\
In particular, if $I$ is an integer part of $K$, then there are $i\in I$, $r\in K\setminus I$ such that $\text{tp}(a,i)=\text{tp}(a,r)$.
\end{lemma}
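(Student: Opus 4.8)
The plan is to reduce both assertions to the fact quoted just before the lemma: for a finite $a\subseteq K$, the type $\text{tp}(a,x)$ is determined by the position of $x$ relative to $\text{RC}(a)$, so in particular \emph{any two elements of $K$ that both lie above all of $\text{RC}(a)$ realize the same type over $a$}. (If one prefers to avoid invoking \cite{CK}, this special case is immediate from quantifier elimination for $RCF$: $\text{tp}(a,x)$ is determined by the signs of the values $p(a,x)$, where $p$ ranges over polynomials with integer coefficients; for each such $p$, the polynomial $p(a,Z)$ factors over the real closed field $\text{RC}(a)$ as a constant times a product of linear polynomials $Z-\alpha$ with $\alpha\in\text{RC}(a)$ and of monic quadratics with no root in $\text{RC}(a)$, the latter being positive at every point of $K$; hence for every $z\in K$ with $z>\text{RC}(a)$ the sign of $p(a,z)$ is the same.)

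For the first assertion I would simply take $\varepsilon=1$. The key point is that $\text{RC}(a)$ is a subfield of $K$, hence closed under $r\mapsto r+1$, so from $x>\text{RC}(a)$ one gets $x>r+1$, i.e.\ $x-1>r$, for every $r\in\text{RC}(a)$. Therefore every $y\in(x-1,x+1)$ satisfies $y>x-1>\text{RC}(a)$, so $x$ and $y$ both lie above $\text{RC}(a)$ and $\text{tp}(a,x)=\text{tp}(a,y)$ by the previous paragraph. (Any positive $\varepsilon\le 1$ works equally well.)

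For the ``in particular'' part, I would use that $\text{RC}(a)$ is bounded in $K$ to fix $b\in K$ with $b>\text{RC}(a)$, and then apply the defining property of the integer part $I$ to $b$: there is $i\in I$ with $i\le b<i+1$. The same ``$+1$'' observation gives $i>b-1>\text{RC}(a)$, so $i$ lies above $\text{RC}(a)$. Set $r=i+\tfrac12\in K$. Since $I$ is discretely ordered with least positive element $1$, no element of $I$ lies strictly between $i$ and $i+1$, so $r\notin I$; and $r>i>\text{RC}(a)$, whence $\text{tp}(a,i)=\text{tp}(a,r)$ by the first assertion. I do not expect a genuine obstacle: the only thing needing care is the observation that an element exceeding all of $\text{RC}(a)$ in fact exceeds it by at least $1$ — this is what lets an entire interval, not just the single point $x$, sit above $\text{RC}(a)$ — together with the interplay, in the last clause, between the integer part property (which produces an $i$ close to the bound $b$) and discreteness of $I$ (which guarantees $i+\tfrac12\notin I$).
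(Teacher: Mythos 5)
Your proof is correct. The overall shape matches the paper's --- establish that the type over $a$ is constant on an interval around $x$, then combine the rounding property of $I$ with discreteness to produce $i\in I$ and $r\notin I$ realizing the same type --- but the justification of the key step is genuinely different. The paper simply cites the claim from the proof of Theorem $3.4$ of \cite{DKS} that for \emph{any} $r\notin\text{RC}(\vec a)$ the type $\text{tp}(r,\vec a)$ is realized on a whole interval around $r$ (local constancy off $\text{RC}(a)$, with an $\varepsilon$ that may depend on $r$), and then takes $\varepsilon<\tfrac12$ and an element of the punctured interval outside $I$. You instead prove, via quantifier elimination and the factorization of $p(a,Z)$ over $\text{RC}(a)$ into linear factors with roots in $\text{RC}(a)$ and positive-definite quadratics, the stronger statement that \emph{all} elements of $K$ lying above $\text{RC}(a)$ realize the same type over $a$; combined with the observation that $x>\text{RC}(a)$ forces $x-1>\text{RC}(a)$ (since $\text{RC}(a)$ is closed under adding $1$), this yields the uniform choice $\varepsilon=1$ and makes the ``in particular'' clause immediate with $r=i+\tfrac12$. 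What your route buys is self-containedness and a cleaner, uniform $\varepsilon$ for the special case actually needed here; what the cited claim buys is the more general local-constancy statement for arbitrary $r\notin\text{RC}(\vec a)$, which the paper reuses elsewhere (e.g.\ in Lemma \ref{inftranscdegAC}, where the relevant point is infinitesimally close to, rather than above, the definable closure). Both arguments are sound.
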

\begin{proof}
By the claim in the proof of Theorem $3.4$ of \cite{DKS}, if $r\notin\text{RC}(\vec{a})$, then $\text{tp}(r,\vec{a})$ is realised
by all elements of an interval $C$ containing $r$.\\
The other statement follows easily as $\text{RC}(a)$ is bounded in $K$, while $I$ is unbounded
 and so $I\setminus\text{RC}(a)\neq\emptyset$. Hence we can pick some $i\in I$ bigger than all elements of $\text{RC}(\vec{a})$, pick $\varepsilon<\frac{1}{2}$ as in the first statement and take $r\in(i-\varepsilon,i+\varepsilon)\setminus K$.
Then $i$ and $r$ are as desired.
\end{proof}

\begin{lemma}{\label{homauto}}
Let $M$ be homogenous, $n\in \mathbb{N}$, $\vec{a},\vec{b}\in M^{n}$, $\text{tp}(\vec{a})=\text{tp}(\vec{b})$.
Then there is an automorphism $\pi$ of $M$ such that $\pi(\vec{a})=\vec{b}$. In particular, this holds when $M$ is countable and $\omega$-homogenous.
\end{lemma}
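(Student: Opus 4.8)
The plan is to prove the substantive ``in particular'' assertion by a standard back-and-forth argument; the opening sentence then either is taken as a strong enough hypothesis to quote directly, or, if ``homogenous'' there is meant in the sense that type-equal tuples are conjugate under $\mathrm{Aut}(M)$, it literally includes the conclusion and needs no argument. So assume $M$ is countable and $\omega$-homogeneous, fix an enumeration $M=\{m_k:k\in\N\}$, and build an increasing chain $f_0\subseteq f_1\subseteq\cdots$ of \emph{finite partial elementary maps} of $M$ --- injections $f_j$ with finite domain such that $\text{tp}(\vec c)=\text{tp}(f_j(\vec c))$ for every tuple $\vec c$ from $\operatorname{dom}(f_j)$ --- starting from $f_0$ defined by $f_0(a_i)=b_i$ for $i=1,\dots,n$. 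That $f_0$ is partial elementary is exactly the hypothesis $\text{tp}(\vec a)=\text{tp}(\vec b)$.

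First I would treat the ``forth'' steps: given $f_j$ with $\operatorname{dom}(f_j)=\{d_1,\dots,d_r\}$ and image $\{e_1,\dots,e_r\}$, let $c$ be the first element of the enumeration not in $\operatorname{dom}(f_j)$; since $\text{tp}(d_1,\dots,d_r)=\text{tp}(e_1,\dots,e_r)$, $\omega$-homogeneity produces $d\in M$ with $\text{tp}(d_1,\dots,d_r,c)=\text{tp}(e_1,\dots,e_r,d)$, and I set $f_{j+1}=f_j\cup\{(c,d)\}$, which is again partial elementary. For the ``back'' steps I apply $\omega$-homogeneity with the two tuples exchanged --- legitimate because type equality is symmetric --- to force the first missing element of the enumeration into the image of $f_{j+1}$. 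Alternating the two kinds of step and setting $\pi=\bigcup_j f_j$ yields a map whose domain and range are all of $M$; it is injective and type-preserving on every finite tuple, hence preserves and reflects every atomic formula (and every formula), so $\pi\in\mathrm{Aut}(M)$, and by construction $\pi(\vec a)=\vec b$.

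There is essentially no obstacle here beyond bookkeeping; the one point worth stating explicitly is that the definition of $\omega$-homogeneity given above is used in both directions of the construction, which is justified by the symmetry of $\text{tp}(\cdot)=\text{tp}(\cdot)$. In the applications (Lemma~\ref{subsetremoval} together with Lemma~\ref{closetypes}) this is what lets us convert the coincidence of types of $(\vec a,i)$ and $(\vec a,r)$ into an automorphism of $K$ fixing $\vec a$ and moving $i$ off $I$, which is the mechanism for removing a purported integer part.
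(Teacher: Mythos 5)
Your back-and-forth construction is correct and is precisely the standard proof of the result that the paper's own ``proof'' consists of citing (Proposition 4.2.13 of \cite{Ma}), so you have in effect supplied the argument the paper outsources. Restricting to the countable $\omega$-homogeneous case loses nothing for the paper's applications, and your remark that the ``back'' steps are licensed by the symmetry of $\text{tp}(\cdot)=\text{tp}(\cdot)$ is exactly the right point to make explicit.
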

\begin{proof}
See Proposition $4.2.13$ of \cite{Ma}.
\end{proof}

\begin{lemma}{\label{constructionofautomorphisms1}}
 Let $K$ be a countable, unbounded, $\omega$-homogenous real closed field, let $\vec{a}\subseteq K$ be finite, and let $I$ be an integer part of $K$.
Then there is an automorphism $\pi$ of $K$ such that $\pi$ fixes $\vec{a}$ pointwise and there are $x\in I$, $y\in K\setminus I$
such that $\pi(x)=y$. In particular, $\vec{a}$ is not a support for $I$.
\end{lemma}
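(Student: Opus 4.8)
The plan is to obtain $\pi$ by assembling the two preceding lemmas, with the finite tuple $\vec a$ carried along inside the tuple whose type is preserved so that $\pi$ ends up fixing it pointwise. First I would observe that the hypotheses of Lemma~\ref{closetypes} are met: since $K$ is unbounded, the real closure $\text{RC}(\vec a)$ of the finite tuple $\vec a$ is bounded in $K$. Applying the ``in particular'' clause of Lemma~\ref{closetypes} to the integer part $I$, I get elements $i\in I$ and $r\in K\setminus I$ with $\text{tp}(\vec a,i)=\text{tp}(\vec a,r)$.

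Next, since $K$ is countable and $\omega$-homogenous, Lemma~\ref{homauto} applies to the two tuples $(\vec a,i)$ and $(\vec a,r)$, which have the same type; it produces an automorphism $\pi$ of $K$ with $\pi(\vec a,i)=(\vec a,r)$. Reading off the coordinates, $\pi$ fixes every entry of $\vec a$ and sends $i$ to $r$, so taking $x:=i\in I$ and $y:=r\in K\setminus I$ establishes the first assertion. For the ``in particular'' statement, note that $\pi$ fixes $\vec a$ pointwise yet $\pi[I]\neq I$, since $i\in I$ while $\pi(i)=r\notin I$; hence by Definition~\ref{Msymmetric} the set $\vec a$ is not a support for $I$.

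There is no genuine obstacle here beyond matching the hypotheses of the cited lemmas: the unboundedness of $K$ is exactly what makes $\text{RC}(\vec a)$ bounded so that Lemma~\ref{closetypes} can be invoked, and the ``homogenous'' hypothesis of Lemma~\ref{homauto} is, for countable structures, delivered by $\omega$-homogeneity (as the last sentence of that lemma records). One should also keep in mind the Remark after Definition~\ref{Msymmetric}: because $K$ is an $RCF$, an automorphism fixing the finite set $\vec a$ setwise fixes it pointwise, so the phrasing ``fixes $\vec a$ pointwise'' in the conclusion is unambiguous.
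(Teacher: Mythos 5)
Your proof is correct and follows essentially the same route as the paper's: unboundedness gives that $\text{RC}(\vec a)$ is bounded, Lemma \ref{closetypes} supplies $i\in I$ and $r\in K\setminus I$ with $\text{tp}(\vec a,i)=\text{tp}(\vec a,r)$, and Lemma \ref{homauto} yields the automorphism fixing $\vec a$ and sending $i$ to $r$, whence $\pi[I]\neq I$. No differences worth noting.
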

\begin{proof}
As $K$ is unbounded, $\text{RC}(\vec{a})$ is bounded in $K$. Hence, by Lemma \ref{closetypes}, there
are $i\in I$, $r\in K\setminus I$ such that $\text{tp}(\vec{a},i)=\text{tp}(\vec{a},r)$.
By Lemma \ref{homauto}, there is an automorphism $\pi$ of $K$ such that $\pi$ is the identity on $\vec{a}$
and $\pi(i)=r$.\\
Hence $\pi$ fixes $\vec{a}$, but $I\neq\{\pi(j)|j\in I\}$. Thus $\vec{a}$ is not a support for $I$.
\end{proof}

\begin{corollary}{\label{nosupport}}
If $K$ is a countable, unbounded, $\omega$-homogenous real closed field, then no integer part $I$ of $K$ has a support.
\end{corollary}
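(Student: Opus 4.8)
The plan is to obtain this as an immediate consequence of Lemma \ref{constructionofautomorphisms1}. Suppose, for a contradiction, that $I$ is an integer part of the countable, unbounded, $\omega$-homogenous $RCF$ $K$ and that $s\subseteq K$ is a support for $I$ in the sense of Definition \ref{Msymmetric}; that is, $s$ is finite and $\pi[I]=I$ for every automorphism $\pi$ of $K$ that fixes $s$ pointwise. Enumerate $s$ as a finite tuple $\vec{a}\subseteq K$ (taking $\vec{a}$ to be the empty tuple if $s=\emptyset$, which is permitted since Lemma \ref{constructionofautomorphisms1} only requires $\vec{a}$ to be finite).

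Now apply Lemma \ref{constructionofautomorphisms1} to $K$, the tuple $\vec{a}$, and the integer part $I$. This produces an automorphism $\pi$ of $K$ which fixes $\vec{a}$ — and hence $s$ — pointwise, and for which there exist $x\in I$, $y\in K\setminus I$ with $\pi(x)=y$. Then $y\in\pi[I]\setminus I$, so $\pi[I]\neq I$, contradicting the assumption that $s$ is a support for $I$. Since $I$ was an arbitrary integer part of $K$, it follows that no integer part of $K$ has a support.

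There is essentially no genuine obstacle here; the only point worth checking is a bookkeeping one, namely that the notion of support in Definition \ref{Msymmetric} quantifies over automorphisms of $K$ in the ambient universe $V$, which is exactly the setting in which Lemma \ref{constructionofautomorphisms1} (via the back-and-forth argument underlying Lemma \ref{homauto}) builds its automorphism. Supports are finite by definition, so the finiteness hypothesis on the parameter tuple in Lemma \ref{constructionofautomorphisms1} is met, and the argument goes through without further work.
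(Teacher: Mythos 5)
Your proposal is correct and follows exactly the paper's route: the paper also derives the corollary as an immediate consequence of Lemma \ref{constructionofautomorphisms1}, applied to an arbitrary integer part $I$ and an arbitrary finite tuple $\vec{a}$. You have merely written out the contradiction that the paper leaves implicit, and your side remark about automorphisms being taken in the ambient universe $V$ matches the caveat the paper itself makes before Definition \ref{Msymmetric}.
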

\begin{proof}
Immediate, as Lemma \ref{constructionofautomorphisms1} is true for all integer parts $I$ of $K$ and all finite $\vec{a}\subseteq K$.
\end{proof}

\begin{thm}{\label{countableandhomogenous}}
 Let $K$ be a countable, unbounded, $\omega$-homogenous $\text{RC}F$. Then there exists $M\models ZF$ containing
an isomorphic copy of $K$ such that $K$ has no integer part in $M$.
\end{thm}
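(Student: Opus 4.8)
The plan is to apply Lemma \ref{subsetremoval} directly to the structure $K$. This produces a transitive model $N(K)\models ZF$ containing an $N(K)$-symmetric isomorphic copy $K^{\prime}$ of $K$; fix once and for all (in $V$) an isomorphism $\sigma\colon K\to K^{\prime}$. I claim that $M:=N(K)$ works, i.e. that $K^{\prime}$ is a real closed field in $M$ but has no integer part in $M$.

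That $K^{\prime}$ is an $RCF$ in $M$ is immediate: it is one in $V$, being isomorphic to the $RCF$ $K$, and by Lemma \ref{absoluteness}(a) this transfers down to $M$. For the main point, suppose towards a contradiction that $M\models$``$I$ is an integer part of $K^{\prime}$'' for some $I\in M$. Since the defining condition of an integer part only quantifies over $K^{\prime}$ and $I$, we have $I\subseteq K^{\prime}$ and $I\in M$, hence $I\in\mathfrak{P}^{M}(K^{\prime})$. By $M$-symmetry of $K^{\prime}$ there is then a finite $s_{I}\subseteq K^{\prime}$ that is a support for $I$, meaning that $\pi[I]=I$ for every automorphism $\pi$ of $K^{\prime}$ fixing $s_{I}$ pointwise — where, crucially, ``every automorphism'' is understood in $V$, not relativized to $M$.

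On the other hand, being countable, unbounded, and $\omega$-homogenous are isomorphism-invariant properties, so $K^{\prime}$ is a countable, unbounded, $\omega$-homogenous $RCF$ in $V$; and by Lemma \ref{absoluteness}(d), $I$ is an integer part of $K^{\prime}$ in $V$ as well. Applying Corollary \ref{nosupport} in $V$ to $K^{\prime}$, we conclude that no integer part of $K^{\prime}$ — in particular not $I$ — admits a support, contradicting the previous paragraph. Hence $K^{\prime}$ has no integer part in $M$, and $M=N(K)$ is as required.

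The only delicate point I anticipate is bookkeeping between $V$ and $M$: the notion of support in Definition \ref{Msymmetric} refers to automorphisms of $K^{\prime}$ living in the ambient universe $V$, and the argument succeeds precisely because Corollary \ref{nosupport} is a statement in $V$ about such automorphisms, while the absoluteness of ``$RCF$'' and ``integer part'' (Lemma \ref{absoluteness}) lets us shuttle the hypotheses between $M$ and $V$. A minor caveat to flag is that $K^{\prime}$ need not remain countable inside $M$, but this is harmless, since Corollary \ref{nosupport} is invoked in $V$, where $K^{\prime}\cong K$ is genuinely countable.
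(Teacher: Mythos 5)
Your proposal is correct and follows essentially the same route as the paper's own proof: apply Lemma \ref{subsetremoval} to $K$, use $N(K)$-symmetry to extract a finite support for any putative integer part lying in the model, and contradict Corollary \ref{nosupport}, with Lemma \ref{absoluteness} shuttling the relevant properties between $M$ and $V$. Your extra bookkeeping distinguishing $K$ from $K^{\prime}$ and flagging that countability is only needed in $V$ is a slightly more careful rendering of the same argument.
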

\begin{proof}
 By Lemma \ref{constructionofautomorphisms1}, no integer part of $K$ can have a support. Hence we can apply Lemma \ref{subsetremoval} to
get a model $M$ of $ZF$ containing an isomorphic copy of $K$. Suppose that $M$ contains an integer part $I$ for $K$.
Then $I$ is in particular a subset of $K$ contained in $M$ and hence has a support $\vec{a}$. But this contradicts Corollary \ref{nosupport} (mind
our remarks on the meaning of `every automorphism' preceeding Definition \ref{Msymmetric}).
\end{proof}

\begin{corollary}{\label{IndependenceOfIPs}}
There are transitive models of $ZF$ which contain an RCF without an integer part. Consequently, $\phi_{IP}$ is independent of $ZF$.
\end{corollary}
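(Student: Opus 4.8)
The plan is to reduce the corollary to Theorem~\ref{countableandhomogenous}: it suffices to produce a single countable, unbounded, $\omega$-homogenous real closed field, since feeding such a field into that theorem yields a transitive $M\models ZF$ (transitivity coming from Lemma~\ref{subsetremoval}, which its proof invokes) containing an isomorphic copy $K'$ of it with no integer part of $K'$ in $M$. So the real work lies in the construction, and I would build $K$ as the union of an elementary chain. Start with $K_{0}=\mathbb{Q}^{rc}$, any countable $RCF$. Given a countable $RCF$ $K_{n}$, choose a countable $RCF$ $K_{n+1}$ with $K_{n}\prec K_{n+1}$ such that (i) $K_{n+1}$ contains an element exceeding every element of $K_{n}$, and (ii) for all finite $\vec a,\vec b\subseteq K_{n}$ with $\text{tp}(\vec a)=\text{tp}(\vec b)$ and all $c\in K_{n}$, there is $d\in K_{n+1}$ with $\text{tp}(\vec a,c)=\text{tp}(\vec b,d)$. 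Each of the countably many demands in (i) and (ii) is a consistent set of formulas over finitely many parameters from $K_{n}$ (for (ii), the relevant type over $\vec b$ is finitely satisfiable in $K_{n}$ precisely because $\text{tp}(\vec a)=\text{tp}(\vec b)$), so by compactness and the downward L\"owenheim--Skolem theorem a single countable elementary extension $K_{n+1}$ realizing all of them exists. Put $K:=\bigcup_{n}K_{n}$.

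Being the union of an elementary chain of models of the first-order theory of real closed fields, $K$ is a countable $RCF$. It is $\omega$-homogenous: any potential counterexample $\vec a,\vec b,c$ lies in some $K_{n}$, and clause (ii) supplied the required $d$ in $K_{n+1}\prec K$. It is unbounded: a finite $\vec a\subseteq K$ lies in some $K_{n}$, hence $\text{RC}(\vec a)\subseteq K_{n}$ (the real closure of a finite tuple being computed identically in $K_{n}$ and in $K$), and clause (i) put into $K_{n+1}\subseteq K$ an element above all of $K_{n}$, in particular above $\text{RC}(\vec a)$. Applying Theorem~\ref{countableandhomogenous} to $K$ now yields a transitive $M\models ZF$ in which $K'$ is a real closed field with no integer part (cf.\ Lemma~\ref{absoluteness}), which is the first assertion. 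For independence, $M\models\neg\phi_{IP}$ gives $ZF\nvdash\phi_{IP}$, while $\phi_{IP}$ is a theorem of $ZFC$ by \cite{MR} and hence holds in G\"odel's $L$, so $ZF\nvdash\neg\phi_{IP}$; thus $\phi_{IP}$ is independent of $ZF$.

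The only nonroutine point is the construction of $K$: one must keep the non-elementary property ``unbounded'' alive along the chain while simultaneously meeting the homogeneity demands. This is exactly what the auxiliary clause (i) handles — it forces $\text{RC}(\vec a)$ to become bounded one stage after $\vec a$ first appears — and combining it with (ii) is harmless, since both are merely consistent type-realization requirements to be absorbed into one countable elementary extension. Everything else (the passage to $M$, the absoluteness bookkeeping, and the derivation of independence from the Mourgues--Ressayre theorem together with the relative consistency of $AC$) is immediate from the results already established.
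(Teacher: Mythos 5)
Your proof is correct, but it reaches the required witness field by a genuinely different route than the paper. The paper does not construct $K$ by hand: it takes a countable nonstandard model $I\models PA$, lets $K$ be the real closure of its fraction field, and then quotes Proposition $3.3$ and Theorem $5.1$ of \cite{DKS} (unboundedness and recursive saturation) together with the Barwise--Schlipf results \cite{BS} (countable recursively saturated $\Rightarrow$ resplendent $\Rightarrow$ $\omega$-homogenous) before invoking Theorem \ref{countableandhomogenous}. You instead build $K$ directly as the union of an elementary chain, interleaving a cofinality requirement (clause (i), which kills boundedness of each $\text{RC}(\vec a)$ one stage later) with type-realization requirements (clause (ii), which yields $\omega$-homogeneity of the union); your justification that each stage exists via finite satisfiability, compactness and downward L\"owenheim--Skolem is sound, as is the observation that $\text{RC}(\vec a)=\text{dcl}(\vec a)\subseteq K_n$ whenever $\vec a\subseteq K_n\prec K$. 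What each approach buys: the paper's argument is shorter modulo the cited literature and exhibits a concrete, arithmetically meaningful example (the real closure of a model of $PA$); yours is self-contained, avoids recursive saturation and resplendency entirely, and is exactly the template the paper itself later deploys in Theorem \ref{rfswithoutAC} and generalizes to arbitrary countable $o$-minimal theories in Theorem \ref{ominmc} (there the homogeneous extension step is delegated to Proposition $4.3.6$ of \cite{Ma} rather than argued by hand). The derivation of independence at the end (via $L$, versus the paper's appeal to the relative consistency of $ZFC$ over $ZF$) is an immaterial variation.
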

\begin{proof}
Let $I\models PA$ be countable and nonstandard, and let $K$ be the real closure of its fraction field. By Proposition $3.3$ of \cite{DKS}, $K$ is unbounded.
Certainly, $K$ is countable. By Theorem $5.1$ of \cite{DKS}, $K$ is recursively saturated. By a Theorem of Barwise and Schlipf (see \cite{BS}),
countable recursively saturated structures are resplendent, and by Theorem $2.4$ (ii) of the same paper, resplendent structures are $\omega$-homogenous.
Hence, by Theorem \ref{countableandhomogenous},
there is a transitive $M\models ZF$ such that $M$ contains an isomorphic copy $K^{\prime}$ of $K$ without an integer part in $M$.
By Lemma \ref{absoluteness}, $K^{\prime}$ is an $\text{RC}F$ in $M$. Assume for a contradiction that $M\models$`$K^{\prime}$ has an integer part', and let
$I^{\prime}\in M$ such that $M$ thinks that $I^{\prime}$ is an integer part of $K^{\prime}$. By Lemma \ref{absoluteness} again, $I^{\prime}$
is then an integer part of $K^{\prime}$ in the real world contained in $M$, a contradiction. Hence $M$ believes that
$K^{\prime}$ is a real closed field without an integer part. Thus $\phi_{IP}$ is not provable in $ZF$.\\
On the other hand, $ZFC$ is shown to imply $\phi_{IP}$ in \cite{MR}. As $ZFC$ is consistent relative to $ZF$ (see e.g. Theorem $3.5$ of \cite{J}),
$\phi_{IP}$ is consistent with $ZF$. Thus $\phi_{IP}$ is independent of $ZF$.
\end{proof}

\subsection{Valuation-theoretical consequences}

As a byproduct of the considerations above, we get two consequences for the valuation theory of real closed fields.

\begin{defini}
 Let $G$ be a totally ordered abelian group. Then $x,y\in G$ are archimedean equivalent iff there is $n\in\mathbb{N}$ such that $x<ny$ and $y<nx$.
If $K$ is a real closed field, then $x,y\in K$ are called archimedean equivalent - written $x\sim y$ iff they are archimedean equivalent as elements of the totally ordered abelian group $(K,+)$.
\end{defini}

\begin{defini}
Let $K$ be a real closed field with value group $\theta(K^{\times})$. A value group section of $K$ is the image of a group embedding $f:\theta(K^{\times})\rightarrow K^{>0}$ from 
the value group of $K$ to its multiplicative group of positive elements that intersects each $\sim$-equivalence class in exactly one element.
\end{defini}

It was proved in \cite{Ka} (Theorem $8$) that every real closed field has a value group section with respect to the standard valuation.
Morever, it was shown in \cite{KL} that the construction of value group sections is $\Delta_{2}^{0}$ in $K$
for countable real closed fields $K$ and that this bound is strict. The proof in \cite{Ka} uses Zorn's Lemma. We show that this is actually a necessary
ingredient.

\begin{prop}{\label{valgroupdiscunb}}
 Let $K$ be a non-archimedean real closed field, and let $G\subseteq K$ be a value group section of $K$ with respect to the natural valuation with corresponding
embedding $t:\theta(K^{\times})\rightarrow K$. 
Then for each $x\in K$, there is $y\in G$ with $y>x$ and for each $y\in G$, there is $\varepsilon\in K$ such that $(y-\varepsilon,y+\varepsilon)\cap G=\{y\}$.
\end{prop}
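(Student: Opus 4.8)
The plan is to reduce everything to elementary facts about the natural valuation $v\colon K^{\times}\to\Gamma$, where $\Gamma:=\theta(K^{\times})$, whose valuation ring $\mathcal{O}$ is the ring of finite elements of $K$ and whose maximal ideal consists of the infinitesimals. First I would record the standard $ZF$-provable dictionary: for $a\in K^{>0}$ one has $v(a)=0$ iff $\tfrac1n<a<n$ for some $n\in\mathbb{N}$; $v(a)>0$ iff $a$ is infinitesimal; $v(a)<0$ iff $a$ is infinitely large (i.e. $a>n$ for all $n\in\mathbb{N}$); and for $a,b\in K^{>0}$ one has $a\sim b$ iff $v(a)=v(b)$. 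Then I would unwind the hypothesis on $G$: since $G=t[\Gamma]$ is the image of a group embedding, $G$ is a subgroup of $(K^{>0},\cdot)$ and contains $1=t(0)$, and the requirement that $G$ meet every $\sim$-class exactly once says precisely that $v|_{G}\colon G\to\Gamma$ is a bijection; being the restriction of a group homomorphism it is then a group isomorphism, so in particular $1$ is the \emph{only} element of $G$ with value $0$. Finally, non-archimedeanness of $K$ gives $\Gamma\neq\{0\}$, hence $G\neq\{1\}$; fixing $u\in G$ with $u\neq1$ and replacing $u$ by $u^{-1}\in G$ if necessary so that $u>1$, the relation $v(u)\neq0$ (as $u\neq1$) together with $u>1$ forces $v(u)<0$, i.e. $u$ is infinitely large.

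For the cofinality clause I would argue as follows: given $x\in K$, if $x\le0$ then $y=1\in G$ satisfies $y>x$. If $x>0$, let $g$ be the unique element of $G$ with $v(g)=v(x)$; then $v(x/g)=0$, so $x/g$ is finite, hence $x/g<u$ because $u$ is infinite, and multiplying by $g>0$ gives $x<gu$, with $gu\in G$.

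For the isolation clause, given $y\in G$ I would set $\varepsilon:=y/2$, which is positive since $y\in K^{>0}$, and show $(y-\varepsilon,y+\varepsilon)\cap G=\{y\}$: if $g\in G$ with $|g-y|<\varepsilon$ then $\tfrac12<g/y<\tfrac32$, but $g/y\in G$, and an element of $G$ of positive value would be infinitesimal (hence $<\tfrac12$) while one of negative value would be infinitely large (hence $>2$); so $v(g/y)=0$, and therefore $g/y=1$, i.e. $g=y$.

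There is essentially no hard analytic step here — the argument is bookkeeping with the natural valuation — so the main thing to be careful about is the translation of ``value group section'': one must use that $v|_{G}$ is injective, so that $1$ is the unique value-$0$ element of $G$ (this is exactly what makes the isolation property work), and that $\Gamma\neq\{0\}$ supplies a genuinely infinitely large element of $G$ (without which cofinality would fail; indeed, for archimedean $K$ one would have $G=\{1\}$ and the statement would be false).
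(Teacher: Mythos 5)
Your proof is correct and follows essentially the same route as the paper: for the isolation clause you make the identical choice $\varepsilon=y/2$ and use the fact that the interval $(y/2,3y/2)$ lies in a single archimedean class, of which $y$ is the unique representative in $G$ (you phrase this via $g/y\in G$ and the uniqueness of $1$ as the value-$0$ element, the paper via the injectivity of $t$, which is the same fact). The only cosmetic difference is in the cofinality clause, where the paper exhibits $t(\theta(x^{2}))$ for $x$ infinite, while you exhibit $gu$ with $g$ the $G$-representative of the class of $x$ and $u$ an infinite element of $G$; both witnesses rest on the same observation that an element of $G$ in a strictly higher archimedean class must exceed $x$.
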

\begin{proof}
 Let $x\in K$ be arbitrary. Assume without loss of generality that $x$ is infinite. Then $x^{2}$ is greater than all elements of $\mathbb{N}x$, hence the image of
$\theta(x^{2})$ under $t$ is greater than $x$.\\
Now let $y\in G$. Let $\varepsilon=\frac{y}{2}$. Then all elements of $(y-\varepsilon,y+\varepsilon)$ are archimedean equivalent and hence have the same image $v$ under $\theta$.
As $t$ is injective, $y$ is the only pre-image of $v$ under $t$, so $\text{im}(t)\cap(y-\varepsilon,y+\varepsilon)$ is as desired.
\end{proof}

\begin{corollary}
 Let $M$, $K$ and $K^{\prime}$ be as in the proof of Corollary \ref{IndependenceOfIPs}. Then $M\models `K^{\prime}$ is an RCF and $K^{\prime}$ has no value group section with respect to the standard valuation'.
\end{corollary}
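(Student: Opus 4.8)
The plan is to run the argument of Theorem~\ref{countableandhomogenous} verbatim, with a value group section in the r\^ole of the integer part, using Proposition~\ref{valgroupdiscunb} to supply the two features of integer parts that were exploited there. First I would record the structural consequence of Proposition~\ref{valgroupdiscunb}: if $G\subseteq K$ is a value group section of a non-archimedean $\text{RC}F$ $K$ with respect to the standard (natural) valuation, then $G$ is cofinal in $K$ and every element of $G$ is isolated in $G$, i.e.\ for each $y\in G$ there is $\varepsilon>0$ with $(y-\varepsilon,y+\varepsilon)\cap G=\{y\}$. These are exactly the two properties of an unbounded integer part that drove Lemma~\ref{constructionofautomorphisms1}.

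Next I would prove the analogue of Corollary~\ref{nosupport}: if $K$ is a countable, unbounded, $\omega$-homogenous $\text{RC}F$, then no value group section $G$ of $K$ has a support. Fix a finite $\vec{a}\subseteq K$. Since $K$ is unbounded, $\text{RC}(\vec{a})$ is bounded in $K$; choosing an upper bound and applying cofinality of $G$ (Proposition~\ref{valgroupdiscunb}) we get $y\in G$ with $y>\text{RC}(\vec{a})$. By the isolation clause of Proposition~\ref{valgroupdiscunb}, pick $\varepsilon_{0}>0$ with $(y-\varepsilon_{0},y+\varepsilon_{0})\cap G=\{y\}$, and by Lemma~\ref{closetypes} pick $\varepsilon_{1}>0$ with $\text{tp}(\vec{a},y)=\text{tp}(\vec{a},z)$ for all $z\in(y-\varepsilon_{1},y+\varepsilon_{1})$. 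Putting $\varepsilon=\min(\varepsilon_{0},\varepsilon_{1})$ and using that the order of $K$ is dense, choose any $z\in(y-\varepsilon,y+\varepsilon)$ with $z\neq y$; then $z\notin G$ and $\text{tp}(\vec{a},z)=\text{tp}(\vec{a},y)$. By Lemma~\ref{homauto} (applicable since $K$ is countable and $\omega$-homogenous) there is an automorphism $\pi$ of $K$ fixing $\vec{a}$ pointwise with $\pi(y)=z$. Then $y\in G$ but $\pi(y)=z\notin G$, so $\pi[G]\neq G$ and $\vec{a}$ is not a support for $G$. As $\vec{a}$ was arbitrary, $G$ has no support.

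Finally I would close exactly as in Corollary~\ref{IndependenceOfIPs}. There $K$ is countable, unbounded and $\omega$-homogenous, and $M$ is the model furnished by Lemma~\ref{subsetremoval} in which $K^{\prime}$ is an $M$-symmetric isomorphic copy of $K$; hence every element of $\mathfrak{P}^{M}(K^{\prime})$ has a support. By Lemma~\ref{absoluteness}, $K^{\prime}$ is an $\text{RC}F$ in $M$, the value group and standard valuation of $K^{\prime}$ are available in $M$, and being a value group section of $K^{\prime}$ is absolute between $M$ and $V$. If $M$ believed that $K^{\prime}$ had a value group section $G^{\prime}$ with respect to the standard valuation, then $G^{\prime}$ would genuinely be one, hence $G^{\prime}\in\mathfrak{P}^{M}(K^{\prime})$ would have a support; transporting this along the isomorphism $K^{\prime}\cong K$ contradicts the previous paragraph. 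Therefore $M\models{}$`$K^{\prime}$ is an $\text{RC}F$ and $K^{\prime}$ has no value group section with respect to the standard valuation'.

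\textbf{Main obstacle.} Once Proposition~\ref{valgroupdiscunb} is in hand the argument is routine; the only point requiring a line of care is that the two neighbourhoods of $y$ (the one from isolation in $G$ and the one from Lemma~\ref{closetypes}) overlap in a set strictly larger than $\{y\}$, so that a witness $z\neq y$ with $z\notin G$ and $\text{tp}(\vec{a},z)=\text{tp}(\vec{a},y)$ actually exists --- which is immediate from density of the ordering of $K$.
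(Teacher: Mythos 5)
Your proposal is correct and follows essentially the same route as the paper: it shows that no value group section of $K$ has a support by combining the cofinality and isolation clauses of Proposition~\ref{valgroupdiscunb} with Lemma~\ref{closetypes} and Lemma~\ref{homauto}, then concludes via Lemma~\ref{subsetremoval} and absoluteness. The only (harmless) extra care you take is noting explicitly that Proposition~\ref{valgroupdiscunb} requires $K$ to be non-archimedean, which holds here since $K$ contains a nonstandard model of $PA$.
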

\begin{proof}
By Lemma \ref{subsetremoval}, it suffices to show that no value group section of $K$ can have a support. To see this, let $G$ be a value group section of $K$ and let $\vec{a}\subseteq M$ be finite. As $K$ is unbounded by assumption and
by Proposition \ref{valgroupdiscunb}, there is $g\in G$ greater than all elements of RC$(\vec{a})$. By Lemma \ref{closetypes}, there is $\delta\in K$ such that tp$(\vec{a},x)$ is the same for all
$x\in(g-\delta,g+\delta)$. By Proposition \ref{valgroupdiscunb}, there is $\varepsilon\in K$ such that $(y-\varepsilon,y+\varepsilon)\cap G=\{y\}$.  Let $\varepsilon^{\prime}=\text{min}(\varepsilon,\delta)$ and pick
$z\in (g-\varepsilon^{\prime},g+\varepsilon^{\prime})\setminus\{g\}$ be arbitrary.
Applying Lemma \ref{homauto} to $(\vec{a},g)$ and $(\vec{a},z)$ and as $\varepsilon^{\prime}<\delta$, there is an automorphism $\pi$ of $K$ that fixes $\vec{a}$ and sends $g$ to $z$. As $\varepsilon^{\prime}<\varepsilon$,
it follows that $z$ is not an element of $G$. Hence $\pi$ witnesses that $\vec{a}$ is not a support for $G$. As $G$ and $\vec{a}$ were arbitrary, no value group section of $K$ has a support. Hence $M$ contains no value group section
for $K$. By Lemma \ref{absoluteness}, the claim follows.
% Value group section ist diskret und konfinal, also funktioniert das gleiche Argument wie f\"ur Corollary \ref{IndependenceOfIPs}.
\end{proof}

\begin{corollary}{\label{vgindep}}
 It is independent of $ZF$ whether each real closed field has a value group section with respect to its natural valuation.
\end{corollary}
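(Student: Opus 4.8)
The plan is to assemble the two halves of an independence result in exactly the same way as was done for $\phi_{IP}$ at the end of the proof of Corollary~\ref{IndependenceOfIPs}. For the unprovability direction I would simply invoke the corollary immediately preceding this one: taking $M$, $K$, $K^{\prime}$ as in the proof of Corollary~\ref{IndependenceOfIPs}, we have already established that $M\models$`$K^{\prime}$ is an RCF and $K^{\prime}$ has no value group section with respect to the standard valuation'. Since $M$ is a transitive model of $ZF$, and since by Lemma~\ref{absoluteness}(a),(e) both `being a real closed field' and `being a value group section' are absolute between transitive models of $ZF$, the failure inside $M$ is genuine: there really is a transitive model of $ZF$ containing a real closed field with no value group section. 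Hence the statement that every real closed field has a value group section with respect to its natural valuation is not provable in $ZF$.

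For the consistency direction I would cite Theorem~$8$ of \cite{Ka}, which establishes in $ZFC$ that every real closed field has a value group section with respect to its natural valuation. Because $ZFC$ is consistent relative to $ZF$ (Theorem~$3.5$ of \cite{J}), this statement is consistent with $ZF$. Combining the two directions yields independence of the statement from $ZF$, which is what is claimed.

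I do not expect any genuine obstacle here: the substantive work has already been carried out in Proposition~\ref{valgroupdiscunb} (a value group section of a non-archimedean RCF is unbounded and its points are isolated in it) and in the preceding corollary, which combines this with the automorphism-construction machinery of Lemmas~\ref{closetypes} and~\ref{homauto} and the subset-removal Lemma~\ref{subsetremoval}. The present corollary is therefore only the packaging of those results together with the external input that $ZFC$ proves the existence of value group sections and is relatively consistent with $ZF$; the proof is a two-line deduction and requires no new ideas.
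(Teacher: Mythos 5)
Your proposal is correct and matches the paper's intent exactly: the paper gives no explicit proof for this corollary precisely because it is the two-line packaging you describe, combining the preceding corollary (a transitive model of $ZF$ with an RCF lacking a value group section) with the $ZFC$ existence theorem from \cite{Ka} and the relative consistency of $ZFC$ over $ZF$. No gaps.
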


We now turn to another structure commonly associated with a real closed field, namely residue field sections. Roughly, a residue field section of a real closed field contains the real numbers in that field; the residue field section forms
a maximal archimedean subfield and is hence isomorphic to a subfield of the reals. The relevant facts about residue field sections can be found e.g. in \cite{KL}.
Here is the formal definition:

\begin{defini}
 Let $K$ be an $RCF$, let $F$ be the set of its finite elements (i.e. $\{x\in K:\exists{n\in\mathbb{Z}}|x|<n\cdot 1\}$, usually called the `valuation ring' of $K$) and let $\mu$ be the set of infinitesimal elements of $K$
(i.e. $\{x\in K:\forall{x\in\mathbb{N}}|x|<\frac{1}{n}\}$). It is easy to see that $\mu$ is a maximal ideal of the ring $F$ and hence that the quotient $R:=F/\mu$ is a field, called the residue field of $K$. 
For $x,y\in K$, let us write $x\sim_{\mu}y$ if and only if $x-y\in\mu$. A residue field section
is the image of an embedding $\pi:R\rightarrow F$ that intersects each $\sim_{\mu}$-equivalence class of an element of $F$ in exactly one element.
\end{defini}

It is easy to prove using Zorn's Lemma that every $RCF$ has a residue field section (see e.g. Theorem $8$ of \cite{Ka}). We will now see that Zorn's Lemma is actually necessary.

%ACHTUNG! KONSTRUKTION IST NICHT GANZ KLAR: REICHT DER UNENDLICHE TRANSZENDENZGRAD? Arno meint ja. 
%notfalls setzen wir den austauschsatz für o-minimale theorien ein: ist a in der definierbaren hülle von \vec{b}\cup\{c\} und nicht in der von \vec{b}, so
%ist c in der definierbaren hülle von \vec{b}\cup\{a\}. wählen wir nun die angeblich endliche teilmenge {a_1,...,a_n} von K, deren definierbarer abschluss
%das ganze S enthält, und lassen wir n dabei minimal sein. dann wird {a_2,...,a_n} ein element s_1 von S nicht enthalten, also wird
%a_1 im definierbaren abschluss von {s_1,a_2,...,a_n} liegen. deshalb wird der definierbare abschluss von {s_1,a_2,...,a_n} wieder das ganze S enthalten. 
%wieder wegen minimalität von n gibt es ein s_2 außerhalb von {s,a_3,...,a_n}, also enthält der definierbare abschluss von {s_1,s_2,a_3,...,a_n} ganz S usw.
%letztlich tauschen wir also {a_1,...,a_n} komplett gegen elemente von S aus und S ist über \mathbb{Q} durch endlich viele elemente erzeugt, ein widerspruch.
%austauschsatz für definierbare hüllen: pillay-steinhorn, paper von 1986 aus den transactions of the american mathematical society
\begin{lemma}{\label{inftranscdegAC}}
 Let $K$ be a countable, unbounded, $\omega$-homogenous real closed field such that the residue field $R$ of $K$ has infinite transcendence degree over $\mathbb{Q}$. Then
no residue field section of $K$ has a support.
\end{lemma}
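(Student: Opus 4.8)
The plan is to mimic the argument for Corollary \ref{nosupport}, but instead of exploiting the unboundedness of $K$ to find an element of an integer part that can be moved, I will exploit the infinite transcendence degree of the residue field $R$ over $\mathbb{Q}$ to find a finite element $x$ of $K$ whose residue lies in a residue field section $S$, but which is transcendental over the subfield generated by a given finite support candidate $\vec{a}$, so that $x$ can be perturbed by an infinitesimal without changing its type over $\vec{a}$. First I would fix an arbitrary residue field section $S$ of $K$ and an arbitrary finite $\vec{a}\subseteq K$; the goal is to produce an automorphism $\pi$ of $K$ fixing $\vec{a}$ pointwise with $\pi[S]\neq S$, which by Definition \ref{Msymmetric} shows $\vec{a}$ is not a support for $S$. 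Since $\text{RC}(\vec{a})$ has finite transcendence degree over $\mathbb{Q}$, its image in the residue field $R=F/\mu$ also has finite transcendence degree; as $R$ has infinite transcendence degree over $\mathbb{Q}$, there is a residue class $\bar{x}$ that is transcendental over the residue of $\text{RC}(\vec{a})$, and by definition of a residue field section there is a unique $x\in S$ with $x+\mu=\bar{x}$. This $x$ is a finite, non-infinitesimal element of $K$.

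The key point is then that $x\notin\text{RC}(\vec{a})$: if it were, its residue would be algebraic over the residue of $\text{RC}(\vec{a})$, contradicting the choice of $\bar{x}$. By Lemma \ref{closetypes} (applied with the finite tuple $\vec{a}$ — note $\text{RC}(\vec{a})$ is bounded since $K$ is unbounded, and one uses the first, interval version of the statement rather than the integer-part clause), there is $\varepsilon>0$ in $K$ such that $\text{tp}(\vec{a},x)=\text{tp}(\vec{a},y)$ for every $y\in(x-\varepsilon,x+\varepsilon)$. Shrinking $\varepsilon$ if necessary I may assume $\varepsilon$ is infinitesimal, so that every such $y$ satisfies $y\sim_{\mu}x$; pick $y$ in this interval with $y\neq x$, for instance $y=x+\delta$ for a suitable infinitesimal $\delta$ with $0<\delta<\varepsilon$. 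By Lemma \ref{homauto} (valid since $K$ is countable and $\omega$-homogenous) there is an automorphism $\pi$ of $K$ fixing $\vec{a}$ pointwise with $\pi(x)=y$. Now $y\sim_{\mu}x$ and $y\neq x$, so $y$ and $x$ lie in the same $\sim_{\mu}$-class, and since a residue field section meets each such class in exactly one point, at most one of $x,y$ lies in $S$; as $x\in S$, we get $y\notin S$, hence $\pi[S]\neq S$. Therefore $\vec{a}$ is not a support for $S$, and since $S$ and $\vec{a}$ were arbitrary, no residue field section of $K$ has a support.

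The main obstacle I anticipate is the bookkeeping around residues: making sure that ``transcendence degree of $R$ over $\mathbb{Q}$ is infinite'' genuinely delivers a residue class transcendental over the (finitely generated) residue of $\text{RC}(\vec{a})$, and that the chosen $x\in S$ is not only outside $\text{RC}(\vec{a})$ but can be moved to a \emph{distinct} element of its own $\sim_{\mu}$-class while staying inside the type-preserving interval from Lemma \ref{closetypes}. This is really the observation that the residue map restricted to $\text{RC}(\vec{a})$ has image of transcendence degree at most $|\vec{a}|$ over $\mathbb{Q}$, together with the fact that infinitesimal perturbations stay in the same residue class — both routine once set up, but they are the places where the hypothesis on $R$ and the definition of a residue field section are actually used.
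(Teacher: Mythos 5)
Your proposal is correct and follows essentially the same route as the paper's proof: pick $x\in S\setminus\text{RC}(\vec{a})$ (which exists because $\text{RC}(\vec{a})$ has finite transcendence degree while $R$ has infinite transcendence degree over $\mathbb{Q}$), take an infinitesimal $\varepsilon$ so that the type-constant interval from Lemma \ref{closetypes} meets $S$ only in $x$, and move $x$ off $S$ by an automorphism fixing $\vec{a}$ obtained from Lemma \ref{homauto}. Your detour through the residue map to produce $x$ is slightly more explicit than the paper's one-line assertion that $S\not\subseteq\text{RC}(\mathbb{Q}(\vec{a}))$, but it is the same argument.
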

\begin{proof}
 Let $S$ be a residue field section of $K$, and suppose that $\vec{a}\subseteq K$ is a support for $S$. As $K$ is unbounded, it contains an infinitesimal element $\alpha$; by
definition of a residue field section, if $r\in R$, then $(r-\alpha,r+\alpha)\cap S=\{r\}$, so $R$ is discrete in $K$. As $R$ has infinite transcendence degree over $\mathbb{Q}$ and $\vec{a}$ is finite,
$S$ is not a subset of $\text{RC}(\mathbb{Q}(\vec{a}))$; let $x\in S\setminus \text{RC}(\mathbb{Q}(\vec{a}))$. As in the proof of Lemma \ref{closetypes}, there is $\varepsilon>0$ such that for all elements $z$ of 
$I:=(r-\varepsilon,r+\varepsilon)$, $\text{tp}(a,r)=\text{tp}(a,z)$. We may assume without loss of generality that $\varepsilon\in\mu$ so that
$I\cap R=\{r\}$. Let $z\in I\setminus\{r\}$. Then there is, by $\omega$-homogenity, an automorphism $\pi$ of $K$ that fixes $a$ and sends $r$ to $z$. As $z\notin R$ by the choice of $z$, $a$ is not a support for $R$.
\end{proof}

\begin{thm}{\label{rfswithoutAC}}
 There is a transitive model $M$ of $ZF$ containing a real closed field $K^{\prime}$ without a residue field section. Consequently, $ZF$ does not prove that every real closed fields has a residue field section.
\end{thm}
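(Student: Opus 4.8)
The plan is to reuse the construction from the proof of Corollary~\ref{IndependenceOfIPs} and feed it into Lemma~\ref{inftranscdegAC} in place of Lemma~\ref{constructionofautomorphisms1}. Concretely, let $I\models PA$ be countable and nonstandard and let $K=\mathrm{RC}(\mathrm{Frac}(I))$ be the real closure of its fraction field. As recorded in the proof of Corollary~\ref{IndependenceOfIPs}, $K$ is countable, unbounded (Proposition~3.3 of \cite{DKS}) and $\omega$-homogenous (being recursively saturated by Theorem~5.1 of \cite{DKS}, hence resplendent by Barwise--Schlipf \cite{BS}, hence $\omega$-homogenous). So three of the four hypotheses of Lemma~\ref{inftranscdegAC} are already available, and the only thing left to check is that the residue field $R$ of $K$ has infinite transcendence degree over $\Q$.

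To see this, I would first fix a computable sequence $(\alpha_k)_{k\in\N}$ of real numbers that is algebraically independent over $\Q$. Such a sequence is obtained by a routine diagonalisation: enumerate $\Z[X_1,X_2,\dots]$ as $p_0,p_1,\dots$ and build the $\alpha_k$ bit by bit, at stage $n$ extending the already-chosen finite binary expansions of $\alpha_1,\dots,\alpha_{k(n)}$ so as to keep the tuple outside the (nowhere dense) real zero set of $p_n$; the construction is effective, so each $\alpha_k$ is computable, uniformly in $k$. Now fix any nonstandard $q\in I$. Since $PA$ proves that the digits of a computable real are computable, for each standard $k$ the integer $m_k:=\lfloor q\alpha_k\rfloor$ is definable in $I$ from $q$ and $k$, hence $m_k\in I$ and $m_k/q\in\mathrm{Frac}(I)\subseteq K$; and $|m_k/q-\alpha_k|<1/q$ is infinitesimal, so the residue of $m_k/q$ equals $\alpha_k$. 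Thus $R$ contains $\Q(\alpha_0,\alpha_1,\dots)$, which has infinite transcendence degree over $\Q$, as required.

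With this in hand the argument closes exactly as for integer parts and value group sections. By Lemma~\ref{inftranscdegAC}, no residue field section of $K$ has a support, with respect to the automorphisms of $K$ in $V$. Applying Lemma~\ref{subsetremoval} to $K$ yields a transitive $M\models ZF$ with an isomorphic copy $K'$ of $K$ in which every subset of $K'$ lying in $M$ has a finite support; were $M$ to contain a residue field section of $K'$, transporting it across the isomorphism would give a residue field section of $K$ with a support, a contradiction. Hence $M$ contains no residue field section of $K'$, and by Lemma~\ref{absoluteness}(a),(f) this means $M\models$``$K'$ is a real closed field without a residue field section''. Since $ZFC$ proves that every real closed field has a residue field section (Theorem~8 of \cite{Ka}) and $ZFC$ is consistent relative to $ZF$, the statement is independent of $ZF$; in particular $ZF$ does not prove it.

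The main obstacle is the transcendence-degree verification. Recursive saturation of $K$ is not obviously enough for it, since ``having a finite element with transcendental residue'' is not the realisation of a recursive type ($\mu$ being only type-definable), so one really does need the concrete approximation argument: computable algebraically independent reals, approximated inside $I$ by means of a nonstandard denominator. Once that step is in place, everything else is a direct rerun of the earlier constructions.
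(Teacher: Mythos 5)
Your overall strategy is sound and the final assembly (Lemma~\ref{inftranscdegAC} $+$ Lemma~\ref{subsetremoval} $+$ Lemma~\ref{absoluteness}) matches the paper exactly, but your route to the required field is genuinely different. The paper does \emph{not} reuse the PA-based field from Corollary~\ref{IndependenceOfIPs}: it builds a fresh example by an elementary chain, starting from a subfield $K_{0}$ of $\mathbb{R}$ of infinite transcendence degree over $\mathbb{Q}$ and alternately adjoining an upper bound (by compactness) and passing to a countable $\omega$-homogenous elementary extension; the residue field of the union has infinite transcendence degree simply because it contains the archimedean field $K_{0}$, whose distinct elements have distinct residues. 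That argument is purely model-theoretic and trivially yields the transcendence condition. Your approach buys a more concrete example (the same $K=\mathrm{RC}(\mathrm{Frac}(I))$ that kills integer parts and value group sections, so a single field witnesses all three failures at once), at the price of having to prove something nontrivial about its residue field.

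That verification is where your write-up has a soft spot. The assertion ``since $PA$ proves that the digits of a computable real are computable, $m_{k}:=\lfloor q\alpha_{k}\rfloor$ is definable in $I$ from $q$ and $k$'' is not a valid inference as it stands: $PA$ does not prove the totality of an arbitrary total computable function, and even when a digit function of $\alpha_{k}$ is $\Sigma_{1}$-defined, nothing guarantees that $I\models\exists!m\,\phi(q,m)$ at a \emph{nonstandard} argument $q$, nor that the value $I$ produces there bears any relation to $\alpha_{k}$. To make the step correct you should either (i) arrange the diagonalisation so that the digit functions of the $\alpha_{k}$ are primitive recursive (hence provably total, with provable functionality), in which case the $\Sigma_{1}$ definition computes the true digits at standard arguments and $I$ evaluates it at nonstandard ones; or (ii) avoid evaluating at nonstandard arguments altogether: note that the binary expansion of each $\alpha_{k}$, being computable, lies in the standard system of $I$, and use overspill to find $a\in I$ and a nonstandard $N$ coding an initial segment of that expansion of length $N$; the corresponding dyadic rational $m/2^{N}\in\mathrm{Frac}(I)$ then differs from $\alpha_{k}$ by less than $2^{-n}$ for every standard $n$, hence has residue $\alpha_{k}$. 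With either repair your argument goes through and gives a correct alternative proof; without it, the key claim that $R$ has infinite transcendence degree over $\mathbb{Q}$ is not yet established.
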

\begin{proof}
By Lemma \ref{inftranscdegAC} and Lemma \ref{subsetremoval}, it suffices to construct a countable, unbounded, $\omega$-homogenous $RCF$ whose residue field has infinite transcendence degree over $\mathbb{Q}$. This can be achieved with an elementary chain
argument: Let $K_{0}$ be a subfield of $\mathbb{R}$ with infinite transcendence degree over $\mathbb{Q}$. By compactness, let $U_{0}$ be a countable elementary extension of $K_{0}$ containing an element greater than every element of $K_{0}$.
By Proposition $4.3.6$ of \cite{Ma}, let $H_{0}$ be a countable $\omega$-homogenous elementary extension of $U_{0}$. If $H_{0}$ is unbounded, then it is as desired. otherwise we set $K_{1}:=H_{0}$ and iterate the construction
to obtain a sequence $(K_{i}|i\in\mathbb{N})$ of $\omega$-homogenous real closed fields whose residue field has infinite transcendence degree over $\mathbb{Q}$, where $K_{j}$ contains an element greater than
every element of $K_{i}$ for $j>i$. Then $K:=\bigcup_{i\in\mathbb{N}}$ will be a real closed field (see e.g. Proposition $2.3.11$ of \cite{Ma}), it will be $\omega$-homogenous as a countable union of an
elementary chain of countable $\omega$-homogenous models, it will be countable as a countable union of countable sets and it will have infinite transcendence degree over $\mathbb{Q}$ as this already holds for the subset $K_{0}$.
Hence $K$ is as desired.
\end{proof}

%PROBLEM: DAS ARGUMENT ZEIGT EIGENTLICH, DASS RCFs OHNE UNENDLICHE DISKRETE TEILMENGE EXISTIEREN; DAS KANN ABER NICHT SEIN, DENN DIE MENGE DER VIELFACHEN VON 1 IST IMMER DA. UND NAT\"URLICH GIBT ES KEINE AUTOMORPHISMEN,
%DIE ETWA DIE $1$ VERSCHIEBEN. WO IST DAS PROBLEM?
%DAS PROBLEM IST, DASS EIN INTERVALL VON ELEMENTEN MIT GLEICHEM A-TYP NUR UM ELEMENTE EXISTIERT, DIE NICHT IM DEFINIERBAREN ABSCHLUSS LIEGEN; DER KANN ABER SOGAR DAS GANZE K UMFASSEN,
%ENTSPRECHEND AUCH DEN GANZEN RESTKLASSENKÖRPER. DAS PROBLEM LÄSST SICH EVTL. DANN VERMEIDEN, WENN DER RESTKLASSENKÖRPER UNENDLICHEN TRANSZENDENZGRAD HAT, DANN DÜRFTE ER EIGENTLICH
%NICHT VOLLSTÄNDIG IM DEFINIERBAREN ABSCHLUSS EINER ENDLICHEN TEILMENGE LIEGEN; BEACHTE ABER, DASS DIE TEILMENGE JA KEINE TEILMENGE DER BEWERTUNGSRINGES SEIN MUSS...

\subsection{Generalizations}

\begin{defini}
 Let $(M,<)$ be an ordered structure. $X\subseteq M$ is unbounded iff for every $x\in M$, there is $y\in X$ such that $y>x$. $X\subseteq M$ is discrete iff, for every $x\in X$, there are $a,b\in M$ such that $a<x<b$ and $(a,b)\cap X=\{x\}$.
\end{defini}

\begin{defini}
Let $\mathfrak{L}$ be a first-order language and let $\mathfrak{M}$ be an $\mathfrak{L}$-structure. $x\in\mathfrak{M}$ is definable in $\mathfrak{M}$ iff there is an $\mathfrak{L}$-formula $\phi(v)$ in
one free variable $v$ such that $x$ is the only element $v$ of $\mathfrak{M}$ such that $\mathfrak{M}\models\phi(v)$. If $A\subseteq\mathfrak{M}$, then $x\in\mathfrak{M}$ is definable from $A$ iff
there exist an $\mathfrak{L}$-formula $\phi(v,\vec{w})$ and a finite sequence $\vec{a}\subseteq A$ such that $x$ is the unique element $v$ of $\mathfrak{M}$ with $\mathfrak{M}\models\phi(v,\vec{a})$.
The set of all elements definable from $A$ in $\mathfrak{M}$ is called the definable closure of $A$ in $\mathfrak{M}$, denoted $\text{dcl}_{\mathfrak{M}}(A)$, where the $\mathfrak{M}$ is usually dropped
when the relevant structure is clear from the context.
\end{defini}

\textbf{Remark}: If $K$ is a real closed field, $X\subseteq K$, then dcl($A$) in $K$ will be the relative algebraic closure of $\mathbb{Q}(A)$ in $K$, i.e. $\text{RC}(A)$.

\begin{lemma}{\label{pe}}
Let $M$ be an $o$-minimal structure and $A\subseteq M$. Let $B$ be the set of all elements above dcl($A$) (i.e. strictly larger than all elements of dcl($A$)). Then every two elements in $B$ have the same type over $A$.
\end{lemma}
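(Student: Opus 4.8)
The plan is to use the classical description of types in $o$-minimal structures: over a parameter set $A$, a complete $1$-type is determined by the ``cut'' it realizes in $\dcl(A)$, that is, by the Dedekind cut $(\{d\in\dcl(A): d<x\},\{d\in\dcl(A):d\geq x\})$ together with the information whether $x$ itself lies in $\dcl(A)$. This is exactly the content of Lemma 5.4.3 of \cite{CK} and of the claim invoked in Lemma \ref{closetypes}, applied with the parameter set $A$ rather than a finite tuple. Since every element of $B$ lies strictly above all of $\dcl(A)$, each such element realizes the same cut over $\dcl(A)$, namely the cut whose lower part is all of $\dcl(A)$; moreover no element of $B$ is in $\dcl(A)$ (being strictly above it, in particular distinct from each of its members). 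Hence all elements of $B$ realize the same type over $A$.

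Concretely, first I would fix $b_1,b_2\in B$ and let $p_i=\text{tp}(b_i/A)$. I would then recall (or re-derive from cell decomposition) the standard fact that the map sending a $1$-type over $A$ to the cut it determines in the ordered set $\dcl(A)$ is injective once one also records whether the type is realized inside $\dcl(A)$; equivalently, if $c_1,c_2$ realize the same cut over $\dcl(A)$ and neither is in $\dcl(A)$, then $\text{tp}(c_1/A)=\text{tp}(c_2/A)$. The key point is that every formula $\phi(v,\vec a)$ with $\vec a\subseteq A$ defines, by $o$-minimality, a finite union of points and open intervals with endpoints in $\dcl(\vec a)\subseteq\dcl(A)$; whether $c$ satisfies $\phi(v,\vec a)$ is therefore completely determined by the position of $c$ relative to these finitely many elements of $\dcl(A)$ and by whether $c$ equals one of the point-solutions. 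If $c_1,c_2$ determine the same cut over $\dcl(A)$ and lie outside $\dcl(A)$, they are on the same side of each such endpoint and equal to none of the point-solutions, so $\phi(c_1,\vec a)\leftrightarrow\phi(c_2,\vec a)$.

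Finally I would apply this with $c_i=b_i$: both lie above all of $\dcl(A)$, so the lower part of the cut each determines in $\dcl(A)$ is all of $\dcl(A)$ and the upper part is empty; in particular they determine the same cut, and neither is in $\dcl(A)$ since each is strictly larger than every element of $\dcl(A)$. Hence $\text{tp}(b_1/A)=\text{tp}(b_2/A)$, which is the assertion of the lemma.

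The main obstacle is not conceptual but bookkeeping: making sure the ``cut determines the type'' statement is invoked at the right level of generality, namely with an arbitrary (possibly infinite) parameter set $A$ rather than a finite tuple, and checking that the cited results (Lemma 5.4.3 of \cite{CK}, or the claim in the proof of Theorem 3.4 of \cite{DKS} used in Lemma \ref{closetypes}) indeed give this. Since a single formula uses only finitely many parameters from $A$ at a time, the reduction to the finite-parameter case is immediate, so in the write-up I would simply note this reduction and then cite Lemma \ref{closetypes} or \cite[Lemma 5.4.3]{CK} directly.
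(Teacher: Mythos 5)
Your proposal is correct and follows essentially the same route as the paper: by $o$-minimality each formula over $A$ defines a finite union of points and intervals with endpoints in $\dcl(A)\cup\{\pm\infty\}$, and any two elements lying above all of $\dcl(A)$ sit on the same side of every such endpoint, hence satisfy exactly the same formulas. The paper phrases this as a case split on whether the definable set has a rightmost interval of the form $(c,\infty)$, while you phrase it via the standard ``cuts determine types'' fact, but the underlying argument is identical.
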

\begin{proof} Assume without loss of generality that $A=\text{dcl}(A)$. Let $a, b\in B$. It suffices to
 show that for every formula $\phi(x)$ with one free variable parameters from $A$, either both $a, b$ satisfy $\phi(x)$, or they both satisfy $\neg\phi(x)$.
 By $o$-minimality, $\phi(x)$ defines a finite union of points (in $A$) and intervals (with endpoints in ${\pm\infty} \cup A$). If the rightmost point of $S_{\phi}:=\{x\in M|M\models\phi(x)\}$ is an element of A, then, 
by assumption, both $a, b$ satisfy $\neg\phi(x)$. If not, then $S_{\phi}$ has a rightmost interval $(c, \infty)$ with $c\in A$ and, again by assumption, both $a, b$ satisfy $\phi(x)$.
\end{proof}

%Comments: 
%1) The above proof just shows that the type p={a<x: a\in A} determines a complete type.
%2) p is an example of a non-cut. For definition of cuts and noncuts, see Marker's "Omitting types in o-minimal structures".

\begin{thm}{\label{ominmc}}
 Let $T$ be a countable, consistent and $o$-minimal theory. Then there is a transitive model $N$ of $ZF$ such that $N$ contains a model $\hat{M}\models T$ with no unbounded discrete subset.
\end{thm}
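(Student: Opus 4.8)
The plan is to mimic the strategy used for real closed fields in Theorem~\ref{countableandhomogenous}, but now in the abstract $o$-minimal setting, using Lemma~\ref{pe} in place of Lemma~\ref{closetypes}. First I would fix a countable, consistent, $o$-minimal theory $T$ and produce a suitable model to feed into Lemma~\ref{subsetremoval}. The model $\hat M$ should be countable, $\omega$-homogeneous, and ``unbounded'' in the sense that for every finite $\vec a \subseteq \hat M$ the set $\mathrm{dcl}(\vec a)$ is bounded in $\hat M$. A countable $\omega$-homogeneous model exists by Proposition~4.3.6 of \cite{Ma}; unboundedness can be arranged by the same kind of elementary-chain construction as in the proof of Theorem~\ref{rfswithoutAC}, alternately passing to a countable elementary extension that adds an element above the current model and then to a countable $\omega$-homogeneous elementary extension, and taking the union (the union of an elementary chain of $\omega$-homogeneous countable models is again $\omega$-homogeneous). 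One small point to check: if $T$ has finite models or is otherwise degenerate, ``unbounded'' may be vacuous or impossible, but an $o$-minimal theory has infinite models and the dense/discrete dichotomy makes the construction go through; alternatively if every model of $T$ already has a definable largest element, the statement can be read as trivially true since no unbounded discrete subset exists anyway, so I would note the easy case separately.

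Next I would prove the analogue of Corollary~\ref{nosupport}: if $M$ is a countable, $\omega$-homogeneous model of $T$ in which $\mathrm{dcl}(\vec a)$ is bounded for all finite $\vec a$, then no unbounded discrete subset $X \subseteq M$ has a support. Fix such an $X$ and a finite $\vec a \subseteq M$. Since $\mathrm{dcl}(\vec a)$ is bounded, pick $b \in X$ strictly above $\mathrm{dcl}(\vec a)$ (possible since $X$ is unbounded). By Lemma~\ref{pe}, every element of $M$ lying above $\mathrm{dcl}(\vec a)$ has the same type over $\vec a$ as $b$; in particular, since $X$ is discrete, there is an interval $(c,d)$ around $b$ with $(c,d) \cap X = \{b\}$, and by $o$-minimality and density of the order we may shrink it so that $(c,d)$ still lies above $\mathrm{dcl}(\vec a)$ (here I use that $o$-minimal orders are dense without endpoints on any nontrivial interval, or argue directly that there is some $b' \in M$ with $b' \in (c,d) \setminus \mathrm{dcl}(\vec a)$, $b' \neq b$; if no such $b'$ existed, $(c,d)$ would be a finite set, contradicting $o$-minimality unless it is a single point, which it is not). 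Then $b'$ is above $\mathrm{dcl}(\vec a)$, hence $\mathrm{tp}(b'/\vec a) = \mathrm{tp}(b/\vec a)$, so $\mathrm{tp}(\vec a, b) = \mathrm{tp}(\vec a, b')$, and by Lemma~\ref{homauto} there is an automorphism $\pi$ of $M$ fixing $\vec a$ pointwise with $\pi(b) = b'$. Since $b' \notin X$, $\pi[X] \neq X$, so $\vec a$ is not a support for $X$.

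Finally I would assemble the theorem: apply Lemma~\ref{subsetremoval} to the countable $L$-structure $\hat M$ constructed in the first step to obtain a transitive $N \models ZF$ containing an $N$-symmetric isomorphic copy of $\hat M$, which I identify with $\hat M$. Being a model of $T$ is a first-order property and is preserved (more precisely, $N$ sees $\hat M$ as an $L$-structure satisfying the same sentences, and ``$\hat M \models T$'' is absolute between transitive models of $ZF$ in the style of Lemma~\ref{absoluteness} since $T$ is a fixed countable set of sentences coded as a real). Now suppose for contradiction that $N \models$ ``$\hat M$ has an unbounded discrete subset $X$''. The properties ``$X$ is unbounded in $\hat M$'' and ``$X$ is discrete in $\hat M$'' are $\Delta_0$ with quantifiers bounded by $\hat M$ and $X$, hence absolute, so $X$ really is an unbounded discrete subset of $\hat M$ in $V$, and $X \in \mathfrak{P}^N(\hat M)$, so by $N$-symmetry $X$ has a finite support $\vec a \subseteq \hat M$. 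This contradicts the support-freeness just established (recalling, as in the proof of Theorem~\ref{countableandhomogenous}, that ``every automorphism'' in the definition of support ranges over automorphisms in $V$, not merely those in $N$). Hence $N \models$ ``$\hat M$ has no unbounded discrete subset'', as required.

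The main obstacle I anticipate is the first step, namely guaranteeing simultaneously that the model is countable, $\omega$-homogeneous, and has $\mathrm{dcl}$ of every finite tuple bounded: one must check that the elementary-chain iteration actually terminates or stabilizes at an unbounded model rather than cycling forever without reaching unboundedness, and that the three properties are all preserved under the relevant unions and extensions. This is handled exactly as in Theorem~\ref{rfswithoutAC} — interleave ``add a point on top'' steps with ``pass to a countable $\omega$-homogeneous elementary extension'' steps over $\omega$ many stages; the resulting union is a countable elementary chain of countable $\omega$-homogeneous models, hence countable and $\omega$-homogeneous, and it is unbounded because every finite tuple appears at some finite stage $K_i$ and a later stage $K_j$ puts an element above all of $K_i \supseteq \mathrm{dcl}(\vec a)$. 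The $o$-minimality-specific lemma (Lemma~\ref{pe}) does all the real work in replacing the real-closed-field argument, so once the model is in hand the rest is a routine transcription of the earlier proofs.
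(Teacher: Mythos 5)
Your proposal is correct and follows essentially the same route as the paper's own proof: build $\hat M$ as the union of an elementary chain alternating ``add an element on top'' (compactness) with ``pass to a countable $\omega$-homogeneous elementary extension'' (Proposition 4.3.6 of \cite{Ma}), show via Lemma \ref{pe} and Lemma \ref{homauto} that no discrete unbounded subset has a support, and finish with Lemma \ref{subsetremoval} plus absoluteness. Your extra care about degenerate cases and about locating a second point $b'\neq b$ in the isolating interval (which tacitly requires the order to be dense, the standard convention for $o$-minimality) is if anything slightly more scrupulous than the paper's write-up, which uses the interval $(x-\varepsilon,x+\varepsilon)$ without comment.
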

\begin{proof}
As $T$ is countable and consistent, there is a countable model $M$ of $T$ by the L\"owenheim-Skolem theorem. We will build an elementary chain $(M_{i}|i\in\omega)$ of countable models of $T$ with $M_{0}=M$.
Then we will set $\hat{M}:=\bigcup_{i\in\omega}M_{i}$. Every $M_{i}$ will be $\omega$-homogenous, hence the same will hold for $\hat{M}$ as a union of a countable elementary chain of countably homogenous models is countably homogenous.
We will have $\hat{M}\models T$ by Proposition $2.3.11$ of \cite{Ma}. As a countable union of countable sets, $\hat{M}$ will be countable.\\
The construction will be arranged in such as way that for every finite set $a\subseteq M_{i}$, the definable closure of $a$ in $M_{i+1}$ will be bounded. Thus, if $a\subseteq\hat{M}$ is finite,
the definable closure of $a$ will be bounded in $\hat{M}$.\\
 Let $X\subseteq\hat{M}$ be discrete and unbounded, and let $a\subseteq\hat{M}$ be finite. As $X$ is unbounded, let $x\in X$ be larger than
the definable closure of $a$ in $\hat{M}$. As $T$ is $o$-minimal and $x$ is not in the definable closure of $a$, there is an interval $J=(x-\varepsilon,x+\varepsilon)$ around $x$ such that
$\text{tp}(a,y)=\text{tp}(a,x)$ for all $y\in I$. By discreteness of $X$, chose $\varepsilon\in\hat{M}$ small enough such that $X\cap J=\{x\}$ and let $y\in J\setminus\{x\}$. Then by $\omega$-homogenity
there is an automorphism $\pi_{a,X}$ of $\hat{M}$ that fixes $a$ but sends $x$ to $y$. Hence $X$ has no support and as $X$ was arbitrary, no discrete unbounded subset of $\hat{M}$ has a support.
Consequently, by Lemma \ref{subsetremoval}, there is a countable transitive model $N$ of $ZF$ containing an isomorphic copy of $\hat{M}$, but no discrete unbounded subset of $X$.
As being a model of $T$ and being a discrete unbounded subset is absolute, $N\models`$There is a model of $T$ without a discrete unbounded subset', as desired.\\
Now for the construction: Starting with $M_{0}=M$, there is, (by compactness) a countable elementary extension $M_{0}^{\prime}$ of $M_{0}^{\prime}$ containing an element $x_{0}$ bigger than all elements of $M_{0}$.
We now use Proposition $4.3.6$ from \cite{Ma} to construct a countable elementary $\omega$-homogenous extension $M_{1}$ of $M_{0}^{\prime}$. If $M_{1}$ is unbounded (i.e. the definable closure of $a$ is bounded
for every finite $a\subseteq M_{1}$), we let $\hat{M}=M_1$; otherwise, we repeat the construction step. This creates a potentially infinite elementary sequence $(M_{i}|i\in\omega)$ with the desired properties.
\end{proof}

This, of course, gives both Corollary \ref{vgindep} and Corollary \ref{IndependenceOfIPs} as special cases. But we get much more:

\begin{corollary}{\label{expan}}
 It is consistent with $ZF$ that there is a model of $\mathbb{R}_{exp}$, the elementary theory of the real numbers with addition, multiplication and exponentiation without an integer part, a value group section and a residue field section.
Furthermore, it is consistent with $ZF$ that there is a model of $\mathbb{R}_{an}$, the elementary theory of the real numbers with addition, multiplication and restricted analytic functions (see \cite{DMM}) without an integer part, a value group section and
a residue field section.
\end{corollary}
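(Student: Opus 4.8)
The plan is to obtain both statements from Theorem \ref{ominmc} (suitably adapted), using that $\mathbb{R}_{exp}$ is $o$-minimal by Wilkie's theorem and $\mathbb{R}_{an}$ is $o$-minimal by van den Dries' theorem (see \cite{DMM}). For $\mathbb{R}_{exp}$, whose signature is finite, I would apply Theorem \ref{ominmc} directly with $T:=\mathbb{R}_{exp}$, obtaining a transitive $N\models ZF$ with a model $\hat{M}\models\mathbb{R}_{exp}$ having no unbounded discrete subset. In the ordered field reduct of $\hat{M}$ an integer part would be cofinal (given $x$ there is $i$ in it with $i\le x<i+1$, so $i+1>x$) and discrete as a subset (the interval $(i-1,i+1)$ meets it only in $i$), i.e.\ an unbounded discrete subset of $\hat{M}$ --- impossible; and $\hat{M}$ is non-archimedean (otherwise $\mathbb{Z}$ would be an unbounded discrete subset), so by Proposition \ref{valgroupdiscunb} every value group section of $\hat{M}$ is unbounded and discrete, hence absent. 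By Lemma \ref{absoluteness} this holds in $N$, so for $\mathbb{R}_{exp}$ the integer part and value group section clauses come essentially for free.

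Residue field sections require more, and the same extra work covers $\mathbb{R}_{an}$: a residue field section lies in the valuation ring and is therefore bounded, so its absence is not implied by ``no unbounded discrete subset'', while $\mathbb{R}_{an}$ has an uncountable signature (one function symbol per restricted analytic function), hence no countable model, and so does not fall under Theorem \ref{ominmc} verbatim. I would therefore run the construction behind Theorem \ref{ominmc} inside the fixed countable transitive $\mathcal{M}\models ZFC$ underlying Lemma \ref{subsetremoval}, using full $ZFC$ there rather than demanding a countable model. Inside $\mathcal{M}$: build an elementary chain $(M_{i}\mid i<\omega)$ of models of $T$ (for $T$ either $\mathbb{R}_{exp}$ or $\mathbb{R}_{an}$) where $M_{0}$ is a sufficiently saturated model, so that its residue field has transcendence degree over $\mathbb{Q}$ strictly larger than $|L|+\aleph_{0}$, and where at each step one adjoins an element above the current model and then passes to an $\omega$-homogenous elementary extension via Proposition $4.3.6$ of \cite{Ma}. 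Then $\hat{M}:=\bigcup_{i}M_{i}\models T$ is $\omega$-homogenous, has all definable closures of finite sets bounded (if $\vec{a}\subseteq M_{i}$ then $\text{dcl}^{\hat{M}}(\vec{a})=\text{dcl}^{M_{i}}(\vec{a})$ is bounded by the element adjoined at stage $i$), and has residue field $R$ with $\text{trdeg}_{\mathbb{Q}}(R)>|L|+\aleph_{0}$ (this does not drop under elementary extensions). As $\hat{M}$ and $L$ lie in $\mathcal{M}$ they are countable in $V$, so Lemma \ref{subsetremoval} gives a transitive $N\models ZF$ with an isomorphic copy of $\hat{M}$.

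It remains to check, inside $N$ (using Lemma \ref{absoluteness} and the absoluteness of ``$\hat{M}\models T$'' as in the proof of Theorem \ref{ominmc}), that $\hat{M}$ has no integer part, value group section or residue field section. For the first two one reruns the argument of Theorem \ref{ominmc}: given a would-be support $\vec{a}$ for such a set (which is unbounded and discrete), pick an element $x$ of it above $\text{dcl}(\vec{a})$, use $o$-minimality to find an interval around $x$ on which the type over $\vec{a}$ is constant and which meets the set only in $x$, and use $\omega$-homogenity to move $x$ off the set by an automorphism fixing $\vec{a}$. For a residue field section $S$ one adapts Lemma \ref{inftranscdegAC}: $S$ is the image of a field embedding of $R$, so $\text{trdeg}_{\mathbb{Q}}(S)=\text{trdeg}_{\mathbb{Q}}(R)>|L|+\aleph_{0}$, whereas $\text{dcl}(\vec{a})$ is the closure of a finite set under the $\le|L|+\aleph_{0}$ definable Skolem functions and so has transcendence degree $\le|L|+\aleph_{0}$; hence some $x\in S$ is not in $\text{dcl}(\vec{a})$, and since an infinitesimal interval around $x$ stays in one $\sim_{\mu}$-class and so meets $S$ only in $x$, the same $o$-minimality-plus-$\omega$-homogenity move gives an automorphism fixing $\vec{a}$ that carries $x$ out of $S$. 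So no such substructure has a support, and $N$ is as desired. I expect the main obstacle to be bookkeeping rather than ideas: verifying that the symmetric-model construction of Lemma \ref{subsetremoval} really survives replacing a countable structure in a countable language by an internally large model of $\mathbb{R}_{an}$ in an internally uncountable one (it does, since everything is done inside the countable transitive $\mathcal{M}$), and remembering that residue field sections, unlike integer parts and value group sections, are not eliminated by ``no unbounded discrete subset'' but by the transcendence-degree comparison, which is exactly why $R$ is arranged to be large at the outset.
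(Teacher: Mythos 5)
Your core strategy --- $o$-minimality of $\mathbb{R}_{exp}$ and $\mathbb{R}_{an}$ (via \cite{Wi} and \cite{DMM}) followed by an appeal to Theorem \ref{ominmc} --- is exactly the paper's; its entire proof consists of the two sentences ``It is shown in \cite{Wi} and \cite{DMM} that $\mathbb{R}_{exp}$ and $\mathbb{R}_{an}$ are $o$-minimal. Now we can apply Theorem \ref{ominmc}.'' But the extra work you do is not mere bookkeeping: it repairs two genuine gaps in that one-line argument. First, as you correctly observe, a residue field section lies in the valuation ring and is therefore \emph{bounded}, so the conclusion of Theorem \ref{ominmc} (no unbounded discrete subset) does not eliminate it; one really does need the transcendence-degree argument of Lemma \ref{inftranscdegAC}, and for that one must arrange at the start of the elementary chain that the residue field of $\hat{M}$ has large transcendence degree over $\mathbb{Q}$ (as is done in Theorem \ref{rfswithoutAC} but not in Theorem \ref{ominmc}). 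Second, the language of $\mathbb{R}_{an}$ is uncountable, so $T=\mathbb{R}_{an}$ is not a countable theory and Theorem \ref{ominmc} does not apply verbatim; your device of running the construction inside the countable transitive $ZFC$-model underlying Lemma \ref{subsetremoval} is a reasonable fix, though you should then say explicitly that the resulting $N$ contains a model of the version of $\mathbb{R}_{an}$ available in that ground model (the collection of restricted analytic function symbols is not absolute between a countable transitive model and $V$), or alternatively pass to a countable sublanguage. In short, your proof follows the paper's route but is strictly more complete than the paper's own justification of this corollary.
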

\begin{proof}
It is shown in \cite{Wi} and \cite{DMM} that $\mathbb{R}_{exp}$ and $\mathbb{R}_{an}$ are $o$-minimal. Now we can apply Theorem \ref{ominmc}.
\end{proof}

\textbf{Remark}: Similarly, the same holds for real closed fields with finite Pfaffian chains, Pfaffian functions etc.

\begin{corollary}{\label{nonuniqueness}}
 Let $T$ be countable, consistent and $o$-minimal. Let $P$ be a property of discrete, unbounded subsets of models $M$ of $T$ which is expressible by a $\Delta_{0}(M)$-formula in the parameter $M$. Assume further 
that $ZFC$ proves that for each $M\models T$, there is some $X\subseteq M$ with $\phi(M,X)$. Then $ZFC$ does not prove that $X$ is unique.
\end{corollary}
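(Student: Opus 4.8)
The plan is to argue by contradiction, reusing the model produced in the proof of Theorem~\ref{ominmc}. Suppose, in addition to the standing hypothesis, that $ZFC$ proved ``for every $M\models T$ and all $X,Y$ with $\phi(M,X)$ and $\phi(M,Y)$ one has $X=Y$'', equivalently that $ZFC$ proves $\forall M\,(M\models T\rightarrow\exists!X\,\phi(M,X))$. Working in $ZFC$, I would run the construction from the proof of Theorem~\ref{ominmc} to obtain a countable, $\omega$-homogenous $\hat M\models T$ in which the definable closure of every finite subset is bounded and, as is shown there, \emph{no discrete unbounded subset of $\hat M$ has a support} in the sense of Definition~\ref{Msymmetric}. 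By the hypothesis there is then a unique $X\subseteq\hat M$ with $\phi(\hat M,X)$; since $P$ is a property of discrete unbounded subsets of models of $T$, this $X$ is in particular discrete and unbounded.

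The next step is to observe that uniqueness forces $X$ to be invariant under all automorphisms of $\hat M$. Indeed, let $\sigma$ be an automorphism of the $L$-structure $\hat M$. Since $\sigma$ preserves the order of $\hat M$, the image $\sigma[X]$ is again a discrete unbounded subset of $\hat M$, and since $P$ is preserved by the isomorphism $\sigma\colon(\hat M,X)\to(\hat M,\sigma[X])$, we have $\phi(\hat M,\sigma[X])$ as well; by uniqueness, $\sigma[X]=X$. As $\sigma$ ranges over all automorphisms of $\hat M$ --- each of which fixes the empty set pointwise --- this says exactly that the empty set is a support for $X$, contradicting the fact recalled above that no discrete unbounded subset of $\hat M$ has a support. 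Hence $ZFC$ together with the uniqueness statement is inconsistent, so, assuming $ZFC$ is consistent, $ZFC$ does not prove that $X$ is unique.

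The one point that needs care is the passage from $\phi(\hat M,X)$ to $\phi(\hat M,\sigma[X])$. For an arbitrary $\Delta_0$ set-theoretic formula in the parameter $\hat M$ this could fail, since such a formula might in principle inspect the internal $\in$-structure of the set that codes $\hat M$, which an $L$-automorphism need not respect. What makes it go through is the hypothesis that $P$ is a genuine \emph{property of discrete unbounded subsets of $T$-models}, i.e. an isomorphism-invariant notion, so that $\sigma$, being an isomorphism of $(\hat M,X)$ onto $(\hat M,\sigma[X])$, transports $P$; the $\Delta_0$-expressibility in the parameter $\hat M$ is then used, in the manner of Lemma~\ref{absoluteness}, only to guarantee that the clauses ``$X\subseteq\hat M$'', ``$X$ discrete'' and ``$X$ unbounded'' are robust enough for the uniqueness assertion to be meaningful. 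Granting this, I expect no further obstacle: the argument is simply that uniqueness of the $\phi$-witness would make it automorphism-invariant, which the construction behind Theorem~\ref{ominmc} was designed to preclude.
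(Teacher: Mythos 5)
Your proof is correct under a natural reading of the hypotheses, but it takes a genuinely different route from the paper. The paper's proof is a two-liner: it combines Carlson's conservativity theorem (if $ZFC$ proves $\forall x\exists!y\,\phi$ for $\Delta_0$ $\phi$, then already $ZF$ does) with the $ZF$-unprovability of \emph{existence} established via Theorem \ref{ominmc}, so that provable uniqueness together with provable existence would push existence down to $ZF$, contradicting the choice-free model. Your argument instead stays entirely inside $ZFC$ and uses only the combinatorial core of the proof of Theorem \ref{ominmc} --- that no discrete unbounded subset of $\hat M$ has a support --- to show that a provably unique witness would be invariant under every automorphism of $\hat M$, i.e.\ would have the empty set as a support. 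This is more elementary and self-contained: it needs neither Carlson's theorem nor the symmetric-submodel machinery of Lemma \ref{subsetremoval}, and it makes no real use of the $\Delta_0$-expressibility of $P$. The price is the step from $\phi(\hat M,X)$ to $\phi(\hat M,\sigma[X])$, which you rightly flag as the delicate point: a bare $\Delta_0(M)$ set-theoretic formula need not be preserved by $L$-automorphisms of $M$ (it could inspect the set-theoretic coding of particular elements), so your proof needs the extra reading of ``property of discrete unbounded subsets'' as ``isomorphism-invariant property'', whereas the paper's proof needs only that every witness of $\phi(M,\cdot)$ is a discrete unbounded subset, plus $\Delta_0$-ness. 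For the intended instances (integer parts, value group sections, residue field sections) invariance certainly holds, so both arguments cover the cases the corollary is actually applied to; in general your hypothesis and the paper's are incomparable, and it is worth being explicit about which one is being assumed.
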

\begin{proof}
In \cite{C}, the following general theorem is proved: If $ZFC$ proves $\forall{x}\exists!{y}\phi$ (where $\phi$ is $\Delta_{0}$), then already $ZF$ proves $\forall{x}\exists!{y}\phi$ (here $\exists!$ denotes `there is a unique', as usual).
 As $ZF$ does not prove existence, $ZFC$ does not prove uniqueness here.
%On the conservativity of the axiom of choice over set theory, Timothy J. Carlson, Archive for Mathematical Logic 2011
\end{proof}

The consequence of Theorem \ref{ominmc} can be further strengthened:

\begin{corollary}{\label{ominstrong}}
Let a countable theory $T$ be consistent and $o$-minimal. Then there is a transitive model $N$ of $ZF$ containing a model $\hat{M}$ of $T$ such that, for any unbounded subset $S\subseteq\hat{M}$,
$S$ contains a final segment of $\hat{M}$ (i.e. for some $x\in\hat{M}$, $\{y\in\hat{M}:y>x\}\subseteq S$).
\end{corollary}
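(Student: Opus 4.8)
The plan is to rerun the construction from the proof of Theorem~\ref{ominmc} without change, and then squeeze more out of the symmetry it produces. First I would let $\hat{M}=\bigcup_{i\in\omega}M_i$ be exactly the countable, $\omega$-homogenous model of $T$ built there, so that the definable closure of every finite subset of $\hat{M}$ is bounded in $\hat{M}$. Applying Lemma~\ref{subsetremoval}, I fix a transitive $N\models ZF$ containing an $N$-symmetric isomorphic copy of $\hat{M}$, still written $\hat{M}$; thus every $S\in\mathfrak{P}^{N}(\hat{M})$ has a finite support $\vec{a}\subseteq\hat{M}$, meaning $\pi[S]=S$ for every automorphism $\pi$ of $\hat{M}$ (in $V$) that fixes $\vec{a}$ pointwise. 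It then suffices to prove, working in $V$, the following combinatorial statement: every unbounded $S\subseteq\hat{M}$ that has a finite support contains a final segment of $\hat{M}$. Since every member of $\mathfrak{P}^{N}(\hat{M})$ has such a support, this will give the conclusion inside $N$.

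The core argument I would give is this. Let $S\subseteq\hat{M}$ be unbounded with support $\vec{a}$, and let $B\subseteq\hat{M}$ be the set of elements lying strictly above $\text{dcl}(\vec{a})$. Since $\text{dcl}(\vec{a})$ is bounded in $\hat{M}$, there is $m\in\hat{M}$ with $m>d$ for all $d\in\text{dcl}(\vec{a})$, so $\{y\in\hat{M}:y>m\}\subseteq B$; in particular $B$ contains a final segment of $\hat{M}$. By Lemma~\ref{pe}, all elements of $B$ realize the same type over $\vec{a}$, so by Lemma~\ref{homauto} and $\omega$-homogenity of $\hat{M}$, for any $x,y\in B$ there is an automorphism $\pi$ of $\hat{M}$ fixing $\vec{a}$ pointwise with $\pi(x)=y$. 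As $\vec{a}$ is a support for $S$ we get $\pi[S]=S$, hence $x\in S\iff y\in S$. Therefore either $B\subseteq S$ or $B\cap S=\emptyset$. The second alternative is impossible: since $S$ is unbounded there is $s\in S$ with $s>m$, and then $s\in B$. Hence $B\subseteq S$, and since $B$ already contains a final segment of $\hat{M}$, so does $S$.

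To finish, I would invoke absoluteness exactly as in Theorem~\ref{ominmc}: ``$\hat{M}$ is a model of $T$'', ``$S\subseteq\hat{M}$ is unbounded'', and ``$S$ contains a final segment of $\hat{M}$'' are all expressible by formulas with quantifiers bounded to $\hat{M}$ and $S$ (together with the arithmetical data coding $T$), hence absolute between transitive models of $ZF$ containing the relevant sets. Since the displayed combinatorial statement holds in $V$ for every $S\in\mathfrak{P}^{N}(\hat{M})$, it holds in $N$, so $N\models$ ``$\hat{M}$ is a model of $T$ in which every unbounded subset contains a final segment'', as desired.

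I do not expect a serious obstacle here: the only point that goes beyond Theorem~\ref{ominmc} is the observation that a \emph{supported} unbounded set is not merely non-discrete near $B$ but is actually invariant under the transitive action of the group of automorphisms fixing $\vec{a}$ on $B$, forcing it to contain all of $B$. Given Lemmas~\ref{pe} and \ref{homauto} this is immediate, so the main thing to be careful about is simply checking that the model $\hat{M}$ produced by the construction of Theorem~\ref{ominmc} has precisely the property used, namely boundedness of $\text{dcl}$ of finite sets, and that ``contains a final segment'' is $\Delta_{0}$ so that the transfer to $N$ is legitimate.
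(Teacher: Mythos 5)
Your proof is correct and follows essentially the same route as the paper's: both rest on the fact that all elements above $\text{dcl}(\vec{a})$ realize the same type over $\vec{a}$ (Lemma \ref{pe}), so an automorphism fixing a putative support $\vec{a}$ can move a point of $S$ above $\text{dcl}(\vec{a})$ to a point outside $S$, which you phrase positively as ``$S$ must contain the whole orbit $B$'' while the paper phrases it as a contradiction with $S$ and its complement both being unbounded. Your write-up is if anything slightly more careful, since it makes the final absoluteness transfer to $N$ explicit where the paper just says ``this suffices by Lemma \ref{subsetremoval}.''
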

\begin{proof}
 Construct $\hat{M}$ as in the proof of Theorem \ref{ominmc}. Let $S$ be an unbounded subset that does not contain a final segment of $\hat{M}$; that is, the complement of $S$ in $\hat{M}$ is also unbounded in $\hat{M}$.
Assume that the finite set $a\subseteq\hat{M}$ is a support for $S$. $\text{dcl}(a)$ is bounded in $\hat{M}$, let $x\in S$, $y\in\hat{M}\setminus S$ both be greater than all elements of $\text{dcl}(a)$.
As in the proof of Theorem \ref{ominmc}, we have $\text{tp}(a,x)=\text{tp}(a,y)$, hence there is an automorphism $\pi$ of $\hat{M}$ fixing $a$ such that $\pi(x)=y$. Hence $a$ is not a support for $S$,
a contradiction. This suffices by Lemma \ref{subsetremoval}.
\end{proof}

%ACHTUNG! SETZT VORAUS, DASS DAS THEOREM - BZW. DER BEWEIS - STIMMT!
Similarly, we can re-use the idea of Theorem \ref{rfswithoutAC} to considerably strengthen the conclusion:

\begin{corollary}{\label{rcfwithoutinfdisc}}
 There is a transitive model of $ZF$ containing a real closed field $K$ with infinite transcendence degree over $\mathbb{Q}$, but no discrete subset of infinite transcendence degree over $\mathbb{Q}$.
\end{corollary}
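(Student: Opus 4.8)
The plan is to mimic the proof of Theorem \ref{rfswithoutAC}, replacing ``residue field section'' by ``discrete subset of infinite transcendence degree over $\mathbb{Q}$''. First I would take the countable, unbounded, $\omega$-homogenous real closed field $K$ of infinite transcendence degree over $\mathbb{Q}$ that is produced by the elementary chain argument in the proof of Theorem \ref{rfswithoutAC}; the only features of $K$ actually used below are that it is countable, $\omega$-homogenous, and of infinite transcendence degree over $\mathbb{Q}$ (unboundedness is not needed for this corollary).

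The core step is to show that no discrete $X\subseteq K$ of infinite transcendence degree over $\mathbb{Q}$ has a support. Suppose $\vec{a}\subseteq K$ is finite and is a support for such an $X$. Since $\text{RC}(\vec{a})=\text{dcl}(\vec{a})$ has finite transcendence degree over $\mathbb{Q}$ while $X$ does not, $X\setminus\text{RC}(\vec{a})\neq\emptyset$; pick $x\in X\setminus\text{RC}(\vec{a})$. By the claim from the proof of Theorem $3.4$ of \cite{DKS} (as used in Lemma \ref{closetypes}), $\text{tp}(\vec{a},x)$ is realised by every element of some open interval $C\ni x$. By discreteness of $X$ there are $c<x<d$ in $K$ with $(c,d)\cap X=\{x\}$; choosing any $y\in (C\cap(c,d))\setminus\{x\}$ gives $y\notin X$ and $\text{tp}(\vec{a},y)=\text{tp}(\vec{a},x)$. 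By Lemma \ref{homauto}, there is an automorphism $\pi$ of $K$ fixing $\vec{a}$ pointwise with $\pi(x)=y$; then $y\in\pi[X]\setminus X$, so $\pi[X]\neq X$, contradicting that $\vec{a}$ is a support for $X$.

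Finally I would apply Lemma \ref{subsetremoval} to obtain a transitive $N\models ZF$ containing an $N$-symmetric isomorphic copy $K'$ of $K$, so that every subset of $K'$ lying in $N$ has a support (with respect to all automorphisms in $V$, as in the discussion before Definition \ref{Msymmetric}); by the previous paragraph, $N$ therefore contains no discrete subset of $K'$ of infinite transcendence degree over $\mathbb{Q}$. By Lemma \ref{absoluteness}, $K'$ is a real closed field in $N$; moreover ``$S$ is a discrete subset of $K'$'' is $\Delta_0$ in $K'$ and $S$, and ``$K'$ has infinite transcendence degree over $\mathbb{Q}$'' can, as in the proof of Lemma \ref{absoluteness}, be written with quantifiers bounded by ${}^{<\omega}(\omega\cup K')$ (for every $n$ there are $n$ elements of $K'$ with no nontrivial polynomial relation over $\mathbb{Q}$), hence is absolute between transitive models of $ZF$ containing $K'$. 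Thus $N\models$``$K'$ is a real closed field of infinite transcendence degree over $\mathbb{Q}$ with no discrete subset of infinite transcendence degree over $\mathbb{Q}$'', as desired.

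The main obstacle is the bookkeeping in the core step: one must check that the finiteness of $\vec{a}$ forces $X\not\subseteq\text{RC}(\vec{a})$ and that the discreteness of $X$ supplies a gap around $x$ into which the image point $y$ can be chosen outside $X$. Once $x\notin\text{RC}(\vec{a})$ is secured, $o$-minimality (via the DKS claim) together with $\omega$-homogeneity finishes the argument exactly as in Lemma \ref{inftranscdegAC}.
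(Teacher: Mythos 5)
Your proposal is correct and takes essentially the same route as the paper: the paper's proof is simply the observation that discreteness and infinite transcendence degree are the only properties of a residue field section used in the proof of Theorem \ref{rfswithoutAC}, and your argument spells out exactly that adaptation (reusing the field from Theorem \ref{rfswithoutAC}, running the Lemma \ref{inftranscdegAC} support argument for an arbitrary discrete set of infinite transcendence degree, then applying Lemma \ref{subsetremoval} and absoluteness).
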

\begin{proof}
It suffices to observe that discreteness and infinite transcendence degree are the only properties of a residue field section used in the proof of Theorem \ref{rfswithoutAC}.
\end{proof}

\section{When supports suffice}

In this section, we consider the converse question suggested by our results above: Namely conditions under which if, in the real world, $K$ is an $RCF$ with an integer part $I$ with
a support $a$, there is an integer part of $K$ in any transitive model of $ZF$ containing $K$.

%rigid->supported
\begin{defini}
An $RCF$ $K$ is \textbf{supported} iff its transcendence degree over $\mathbb{Q}$ is finite, i.e. iff there is a finite $a\subseteq K$ such that $K=\text{RC}(a)$.
\end{defini}

It is easy to see that being supported is $\Delta_{0}$ and hence absolute between transitive models of $ZF$. We start with some easy observations:

\begin{prop}
If $K$ is supported, then every integer part of $K$ has a support.\\
Furthermore, the only automorphism $\pi:K\rightarrow K$ with $\pi(a)=a$ is the identity.
\end{prop}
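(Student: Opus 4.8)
The plan is to establish the two assertions separately, both exploiting the fact that a supported $RCF$ $K$ is generated as a real closure by a finite tuple $a$ with $K = \mathrm{RC}(a)$.

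For the first assertion, let $I$ be an integer part of $K$ and fix a finite $a \subseteq K$ with $K = \mathrm{RC}(a)$. I claim $a$ itself is a support for $I$. Indeed, by the Remark following Definition \ref{Msymmetric}, any automorphism $\pi$ of $K$ that fixes $a$ pointwise fixes $K = \mathrm{RC}(a)$ pointwise as well: the real closure of a tuple in $K$ is rigid over that tuple, since each element of $\mathrm{RC}(a)$ is determined over $a$ by its (algebraic) minimal polynomial over $\mathbb{Q}(a)$ together with its position in the ordering, both of which $\pi$ must preserve. Hence $\pi$ is the identity on $K$, so trivially $\pi[I] = I$, and $a$ is a support for $I$. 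Since $\pi[X] = X$ for \emph{every} $X \subseteq K$ in this case, the same argument shows every subset of $K$ — in particular every integer part — has the support $a$.

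The second assertion is really the engine behind the first: I would prove that the only automorphism $\pi : K \to K$ with $\pi(a) = a$ is the identity, which is exactly the rigidity statement used above. The key steps: (i) recall $K = \mathrm{RC}(a) = \dcl(a)$ by the Remark following the definition of $\dcl$, i.e. every element of $K$ is the unique realization in $K$ of some formula $\phi(v,\vec a)$; (ii) observe that any automorphism $\pi$ of $K$ with $\pi(\vec a) = \vec a$ sends the unique solution of $\phi(v,\vec a)$ to the unique solution of $\phi(v,\pi(\vec a)) = \phi(v,\vec a)$, hence fixes that element; (iii) conclude $\pi = \mathrm{id}_K$. Equivalently, one can argue directly: $\pi$ fixes $\mathbb{Q}(a)$ pointwise, hence fixes $\mathbb{Q}(a)^{rc} = \mathrm{RC}(a) = K$ pointwise by uniqueness of the real closure of an ordered field (an order-preserving field automorphism of $\mathrm{RC}(a)$ fixing $\mathbb{Q}(a)$ must be the identity, since it permutes the finitely many roots of each minimal polynomial while preserving order).

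I do not expect a serious obstacle here; the content is essentially the rigidity of real closures over a generating set, already invoked implicitly in the Remark after Definition \ref{Msymmetric} and in the Remark after the definition of $\dcl$. The only point requiring a modicum of care is making precise why $\pi$ fixing $a$ pointwise forces it to fix all of $\mathrm{RC}(a)$: this rests on the fact that an element algebraic over an ordered subfield is pinned down by its minimal polynomial and its order-position, both automorphism-invariant. Once that is spelled out, both statements of the proposition follow immediately.
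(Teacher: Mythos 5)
Your proof is correct and follows the same route the paper intends: the paper's own proof simply declares both claims ``obvious,'' and the rigidity argument you supply (an automorphism fixing $a$ pointwise fixes $\mathbb{Q}(a)$, hence fixes each element of $\mathrm{RC}(a)=K$, since such an element is pinned down by its minimal polynomial over $\mathbb{Q}(a)$ and its order-position among that polynomial's finitely many roots) is exactly the standard justification being left implicit. No gaps.
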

\begin{proof}
Let $a\subseteq K$ be as in the definition of being supported. Then $a$ is obviously a support for every subset of $K$, including every integer part.
The second statement is also obvious.
\end{proof}

This excludes the above construction for eliminating integer parts from being applied to a supported $K$. In fact, it follows from $ZF$ that
every supported real closed fields has an integer part:

\begin{lemma}{\label{rigidIP}}
($ZF$) Let $K$ be supported. Then $K$ has an integer part.
\end{lemma}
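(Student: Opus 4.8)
The plan is to exploit the fact that a supported real closed field $K = \mathrm{RC}(a)$ for finite $a$ is, up to isomorphism, completely determined by the finite data of $a$ together with its type, so the whole of $K$ — including any well-ordering of it — is explicitly definable from $a$ in $ZF$, without any appeal to choice. Concretely, first I would observe that $\mathbb{Q}(a)$ is a finitely generated field and hence countable; its relative algebraic closure $\mathrm{RC}(a) = K$ inside any ambient real closed field is then also countable, and in fact $ZF$ proves that $K$ carries a canonical well-ordering: enumerate $\mathbb{Q}(a)$ by the obvious recursion on the finitely many generators, and then enumerate $K$ by going through pairs (polynomial over $\mathbb{Q}(a)$, index of a root in the ordering of $K$), all of which is an absolutely defined recursion not using $AC$.

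Given such a well-ordering $<_w$ of $K$, the second step is to run the Mourgues--Ressayre construction — or, more elementarily, any construction of an integer part — relative to $<_w$. Since the construction in \cite{MR} uses the axiom of choice only through the availability of a well-ordering of the field (this is exactly the point made in the introduction and in \cite{DKL}, \cite{DKKL}, where a well-ordering is assumed given precisely so the construction becomes effective), once $<_w$ is in hand the entire argument goes through in $ZF$. Alternatively, and perhaps more transparently for a self-contained proof, I would note that $K$ is an ordered field, so by the $ZF$-provable existence of real closures of ordered fields (cited in the introduction) and the fact that $K$ is countably presented, one can build an integer part directly by a transfinite recursion along $<_w$: at each stage add the least new element of $K$ and close off under the operations needed to keep the partial structure a discretely ordered subring with $1$ minimal, using $<_w$ to make all choices canonical. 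Finally, by Lemma \ref{absoluteness}(d), being an integer part is absolute, so the object produced really is an integer part of $K$ in the intended sense.

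The main obstacle I anticipate is making precise and airtight the claim that "the Mourgues--Ressayre construction uses choice only to well-order $K$": one must check that the troublesome Corollary $4.2$ of \cite{MR}, and every other appeal to Zorn's lemma in that paper, becomes a straightforward transfinite recursion once a well-ordering is supplied, with no residual use of choice hidden in, say, passing to the value group, the residue field, or their sections. A cleaner route, which I would probably prefer in writing up the proof, is to avoid \cite{MR} entirely and instead give a direct recursion: since $K=\mathrm{RC}(a)$ is countable with a canonical well-ordering, I would construct an increasing chain of finitely generated discretely ordered subrings exhausting a cofinal and coinitial-at-each-point subset of $K$, at each step handling the least element of $K$ not yet "captured" by adjoining its integer part relative to the subring built so far; the discreteness and minimality of $1$ are preserved because each subring is a subring of the fixed real closed field $K$, and the union is then an integer part. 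Verifying that this union really does satisfy, for every $x \in K$, the existence of $i \in I$ with $i \le x < i+1$ is the one point requiring genuine care, but it follows from arranging the recursion to revisit every element of $K$ cofinally often.
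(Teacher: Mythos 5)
Your proposal follows essentially the same route as the paper: extract a canonical well-ordering of $K=\mathrm{RC}(a)$ in ordertype $\omega$ from the finite generating set (the paper phrases this as $K$ being the Skolem hull of $a$ over an explicitly enumerable set of formulas), and then invoke the choice-free construction of an integer part for a countable well-ordered $RCF$, finishing by absoluteness. The "main obstacle" you flag — whether any residual choice hides in obtaining a residue field or value group section — is exactly the point the paper discharges by first citing Theorem $4.1$ of \cite{KL} for a $ZF$-provable residue field section of a countable $RCF$ and only then applying \cite{DKKL}, so your instinct to worry about that step was correct and the fix is a citation rather than your sketchier direct recursion.
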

\begin{proof}
Let $a\subseteq K$ be finite, $K=\text{RC}(a)$.
Then $K$ is in itself the Skolem hull of $a$ (with respect to formulas in the language of ordered rings).
As the formulas of the language of ordered rings are easily explicitely well-orderable in ordertype $\omega$,
so is $K$. Hence it is provable in $ZF$ that every supported $RCF$ is well-orderable in ordertype $\omega$ and hence countable. By Theorem $4.1$ of \cite{KL}, it has a residue field section (in $ZF$).
Now \cite{DKKL} shows that the construction of $IP$s for countable $RCF$s is constructible given a residue field section and a well-ordering of $K$. Hence, a supported $RCF$ $K$ has
an $IP$ in every $M\models ZF$ such that $K\in M$. By absoluteness of supportedness for $IP$s, every model of $ZF$ believes that every supported $RCF$ has an $IP$, so this is provable in $ZF$.
\end{proof}

%Zum IP-AC-Paper: Ein RCF, der in sich selbst reeller Abschluss einer endlichen Teilmenge ist, ist in jedem ZF-Modell, das ihn enthält,
%abzählbar. Dann hat er nach KL effektiv eine value group section und dann nach DKKL auch effektiv einen IP (denn wohlgeordnet ist er ja
%wegen der Abzählbarkeit in jedem Modell). Das sollte für die "when supports suffice"-Resultate reichen.

\begin{lemma}{\label{rcdense}}
Let $K$ be a homogenous $RCF$, $I$ an integer part of $K$, $a\subseteq K$ a support for $I$. Then $K^{\prime}:=\text{RC}(a)$ is dense in $K$.
\end{lemma}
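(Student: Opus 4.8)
The plan is to turn the hypothesis ``$a$ supports $I$'' into a type‑theoretic description of $I$ and then collide it with the fact that $I$ is discrete and $1$‑dense. Since $K$ is homogenous, Lemma~\ref{homauto} applied to the tuples $(a,p)$ and $(a,q)$ shows that whenever $p,q\in K$ satisfy $\text{tp}(p/a)=\text{tp}(q/a)$ there is an automorphism of $K$ fixing $a$ pointwise and sending $p$ to $q$; hence, if $a$ is a support for $I$, any such $p,q$ satisfy $p\in I$ if and only if $q\in I$, i.e.\ $I$ is a union of realisation sets of complete $1$‑types over $a$. I would also record the fact recalled just before Lemma~\ref{closetypes} (from Lemma~$5.4.3$ of \cite{CK}) that $\text{tp}(z/a)$ depends only on the position of $z$ in the ordering of $\text{RC}(a)$: in particular any two elements lying in a common interval of $K$ disjoint from $\text{RC}(a)$ realise the same type over $a$, and for $z\notin\text{RC}(a)$ the realisation set of $\text{tp}(z/a)$ is an interval (as used in the proof of Lemma~\ref{closetypes}).

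Assume toward a contradiction that $\text{RC}(a)$ is not dense in $K$. The key step is to produce an interval $C\subseteq K$ with at least two elements, disjoint from $\text{RC}(a)$, which is the realisation set of a single $1$‑type over $a$ and meets $I$. For this I would split into two cases. If $\text{RC}(a)$ is not cofinal in $K$, let $B\in K$ bound it above; the integer‑part property makes $I$ cofinal in $K$ (from $i\le x<i+1$ one gets $i+1>x$), so there is $i\in I$ with $i>B$, hence $i>r$ for all $r\in\text{RC}(a)$. Then $C:=\{z\in K: z>r \text{ for all } r\in\text{RC}(a)\}$ is disjoint from $\text{RC}(a)$, contains the distinct elements $i$ and $i+1$, meets $I$, and is exactly the realisation set of $\text{tp}(i/a)$. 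If instead $\text{RC}(a)$ is cofinal in $K$, pick $x<y$ in $K$ with $(x,y)\cap\text{RC}(a)=\emptyset$ (possible by non‑density); for each positive $r\in\text{RC}(a)$ the interval $(rx,ry)$ is still disjoint from $\text{RC}(a)$ (an element $s$ of the intersection would give $s/r\in(x,y)\cap\text{RC}(a)$), and it has length $r(y-x)$, so choosing $r\in\text{RC}(a)$ with $r>3/(y-x)$ gives an interval $J:=(rx,ry)$ of length $>2$ disjoint from $\text{RC}(a)$. Applying the integer‑part property to the midpoint $m$ of $J$ yields $i\in I$ with $i\le m<i+1$, and since $J$ is longer than $2$ this forces $i\in J$. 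Taking $C$ to be the realisation set of $\text{tp}(i/a)$ we get $C\supseteq J$, so $C$ has at least two elements, $C\cap\text{RC}(a)=\emptyset$, and $i\in C\cap I$.

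With such a $C$ fixed, I finish as follows. If $C\subseteq I$, then $C$, being an interval with at least two elements, contains a nonempty open interval $(c_1,c_2)$ of $K$; but then, for $c\in(c_1,c_2)$ and any $\alpha<c<\beta$ witnessing discreteness of $I$ at $c$, the set $(\alpha,\beta)\cap(c_1,c_2)$ is a nonempty open interval contained in $(\alpha,\beta)\cap I=\{c\}$, which is absurd. If $C\not\subseteq I$, choose $i\in C\cap I$ and $z\in C\setminus I$; since $i$ and $z$ realise the same type over $a$, Lemma~\ref{homauto} gives an automorphism $\pi$ of $K$ fixing $a$ pointwise with $\pi(i)=z$, so $z=\pi(i)\in\pi[I]$ while $z\notin I$, contradicting that $a$ is a support for $I$. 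Either way we reach a contradiction, so $\text{RC}(a)$ is dense in $K$. The point I expect to require the most care is precisely the case split producing $C$: a single ``gap'' of $\text{RC}(a)$ can be shorter than $1$ — even infinitesimal — so that no element of the discrete set $I$ lies in it, which is why one cannot simply say ``$I$ is discrete and types over $a$ are intervals''; the field‑scaling trick (available only when $\text{RC}(a)$ is cofinal in $K$) inflates a bounded gap past length $2$, and in the remaining case one uses instead that the cofinal set $I$ must overshoot $\text{RC}(a)$ into the cut at $+\infty$.
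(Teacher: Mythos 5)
Your proof is correct and follows essentially the same route as the paper's: first rule out that $\text{RC}(a)$ fails to be cofinal by moving an element of $I$ beyond $\text{RC}(a)$ to a non-element of the same type over $a$, then inflate a putative gap of $\text{RC}(a)$ by a large element of $\text{RC}(a)$ until it must contain a point of $I$, and apply Lemma~\ref{homauto} to contradict the support property. The only differences are cosmetic (gap length $>2$ with the midpoint rather than $>1$, and an explicit treatment of the degenerate case where the whole type-realisation interval lies in $I$, which the paper handles implicitly via discreteness).
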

\begin{proof}
We start by observing that $K^{\prime}$ must be unbounded in $K$. If not, then there are $i\in I$ with $i>K^{\prime}$
and $j\in K\setminus I$ with $j>K^{\prime}$ such that $\text{tp}(a,i)=\text{tp}(a,j)$ and we can proceed
as in Lemma \ref{constructionofautomorphisms1} to show that $a$ is not a support for $I$, a contradiction.\\
Now assume that $K^{\prime}$ is not dense in $K$ and let $(x,y)$ be an interval of $K$ such that $K^{\prime}\cap(x,y)=\emptyset$. Withous loss
of generality, we assume that $0<x<y$. Now, as $K^{\prime}$ is unbounded in $K$, there is $d\in K^{\prime}$
such that $|dx-dy|>1$. If $z\in K^{\prime}\cap(dx,dy)$, then $zd^{-1}\in K^{\prime}\cap(x,y)$, a contradiction - thus
$K^{\prime}\cap(dx,dy)=\emptyset$. As $|dx-dy|>1$, there is $i\in I\cap(dx,dy)$. Let $r\in(dx,dy)\setminus I$
be arbitrary. Then, as in the proof of Lemma \ref{closetypes} above, $\text{tp}(a,r)=\text{tp}(a,i)$.
Hence, there is an automorphism $\pi$ of $K$ such that $\pi(a)=a$ and $\pi(i)=r$. So $a$ is not a support for $I$, a contradiction.
\end{proof}

\begin{corollary}{\label{IPtransfer}}
Under the assumptions of Lemma \ref{rcdense}, every integer part of $\text{RC}(a)$ is also an integer part of $K$.
\end{corollary}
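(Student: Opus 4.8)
The plan is to fix $K':=\text{RC}(a)$, recall from Lemma \ref{rcdense} that $K'$ is dense in $K$, take an \emph{arbitrary} integer part $J$ of $K'$, and verify directly that $J$ meets the definition of an integer part of $K$. (Such a $J$ exists, e.g.\ by \cite{MR}, so the statement is not vacuous, though we do not need this.) The two structural requirements—that $J$ be a discretely ordered subring of $K$ with $1$ as its minimal positive element—are immediate: $J$ is a subring of $K'\subseteq K$, and the order of $K'$ is the restriction of that of $K$, so the discreteness witnesses (the elements of $J$ together with $j\pm 1$ for $j\in J$) all already lie in $K'\subseteq K$; moreover the key fact that no $k\in J$ satisfies $0<k<1$ is internal to $J$ and independent of the ambient field. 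Hence the gaps of $J$ are exactly $1$ in $K$ just as in $K'$.

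It remains to check the covering property: for every $x\in K$ there is $j^{*}\in J$ with $j^{*}\le x<j^{*}+1$. Here I would use density of $K'$ in $K$ (Lemma \ref{rcdense}) to pick some $y\in K'\cap(x,x+1)$. Since $J$ is an integer part of $K'$, there is $j\in J$ with $j\le y<j+1$. From $j\le y<x+1$ we get $j<x+1$, and from $x<y<j+1$ we get $x<j+1$. A short case distinction then finishes the argument: if $j\le x$, then $j\le x<j+1$, so $j^{*}:=j$ works; if instead $x<j$, then $j-1<x$ (from $j<x+1$) while $x<j=(j-1)+1$, so $j^{*}:=j-1$ works. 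Thus $J$ is an integer part of $K$, and since $J$ was an arbitrary integer part of $\text{RC}(a)$, the corollary follows.

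The only real content is the covering step, and its one subtlety is that elements of $K\setminus K'$ must also be captured by $J$; this is precisely what the density of $\text{RC}(a)$ in $K$—the substance of Lemma \ref{rcdense}—provides, after which the discreteness of $J$ (consecutive elements differing by exactly $1$) pins down the correct integer.
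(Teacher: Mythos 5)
Your proof is correct and follows essentially the same route as the paper: the structural properties of $J$ transfer trivially, and the covering property is obtained by using the density of $\text{RC}(a)$ in $K$ to find a nearby element $y$, rounding $y$ down in $J$, and then adjusting by one unit. Your explicit two-case adjustment is just a cleaner version of the paper's three-interval decomposition $x\in(i,i+1]\cup(i+1,i+2]\cup(i+2,i+3)$.
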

\begin{proof}
Let $I$ be an integer part of $\text{RC}(a)$. Clearly, $I\subseteq \text{RC}(a)\subseteq K$, $I$ is a subring of $K$ with minimal element $1$
and hence discretely ordered. We need to show that each element of $K$ can be rounded down to some element of $I$.
So let $x\in K$. By density of $\text{RC}(a)$, let $x'\in \text{RC}(a)$ such that $|x-x^{\prime}|<1$, and let $i+1\in I$ such that
$i+1\leq x^{\prime}<i+2$. Then $i<x^{\prime}-1<x<x^{\prime}+1<i+3$. Consequently, we have $x\in (i,i+1]\cup(i+1,i+2]\cup(i+2,i+3)$,
so there is $j\in I$ such that $j\leq x<j+1$. As $x$ was arbitrary, $I$ is an integer part of $K$.
\end{proof}

\textbf{Remark}: Note that Lemma \ref{rcdense} and the Corollary were proved in $ZF$.\\

The following theorem shows that, in the homogenous case, supports are exactly what is needed to ensure that integer parts also exist in choice-free universes.

\begin{thm}{\label{supportssuffice}}
Let $K$ be a homogenous $RCF$ with an integer part $I$ which has a support $a$.
Then every transitive model of $ZF$ which contains $K$ also contains an integer part of $K$.
\end{thm}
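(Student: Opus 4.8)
The plan is to reduce everything to the supported case, for which Lemma \ref{rigidIP} already produces an integer part inside any transitive model of $ZF$, and then to transfer that integer part up to $K$ using the density results of the present section, which were proved in $ZF$. Fix a transitive $M \models ZF$ with $K \in M$. Since $a$ is a finite tuple of elements of $K$ and $M$ is transitive, $a \in M$; moreover $\text{RC}(a)$ — the relative algebraic closure of $\mathbb{Q}(a)$ in $K$, equivalently $\text{dcl}_{K}(a)$ — is obtained from $K$ and $a$ by a construction whose quantifiers can be bounded by ${}^{<\omega}(\omega \cup K)$, exactly as in the proof of Lemma \ref{absoluteness}(a). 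Hence $K' := \text{RC}(a) \in M$ and $M$ recognizes $K'$ as $\text{RC}(a)$. Now $K'$ has transcendence degree at most $|a|$ over $\mathbb{Q}$, so $K'$ is supported, and since being a supported $RCF$ is $\Delta_{0}$ (hence absolute between transitive models of $ZF$, as noted right after the definition of supported together with Lemma \ref{absoluteness}(a)), we get $M \models$ ``$K'$ is a supported real closed field''.

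Next I would apply Lemma \ref{rigidIP} inside $M$: that lemma is a theorem of $ZF$ and $M \models ZF$, so from $M \models$ ``$K'$ is supported'' we obtain some $I' \in M$ with $M \models$ ``$I'$ is an integer part of $K'$''. By the absoluteness of ``integer part'' (Lemma \ref{absoluteness}(d)), $I'$ is genuinely an integer part of $K' = \text{RC}(a)$ in $V$. Working now in $V$, the hypotheses of the present theorem are available — $K$ is homogenous and $I$ is an integer part of $K$ with support $a$ — so Lemma \ref{rcdense} gives that $\text{RC}(a)$ is dense in $K$, and then Corollary \ref{IPtransfer} applied to the integer part $I'$ of $\text{RC}(a)$ yields that $I'$ is an integer part of $K$, all in $V$. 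One last application of absoluteness (Lemma \ref{absoluteness}(d)) gives $M \models$ ``$I'$ is an integer part of $K$'', and since $I' \in M$ this is exactly the desired conclusion.

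The only delicate point — and the one I expect to be the main, if minor, obstacle — is the absoluteness bookkeeping: one must check carefully that $\text{RC}(a)$ really is an element of $M$ and is computed there as in $V$, so that Lemma \ref{rigidIP}, a $ZF$-theorem, can legitimately be invoked inside $M$ on input $K'$, and that ``integer part'' is always used through its $\Delta_{0}$ formulation when passing between $V$ and $M$. It is worth emphasizing what is \emph{not} needed: we never require $I$ itself to lie in $M$, nor that $M$ recognize $K$ as homogenous — the homogeneity of $K$ and the existence of the supported integer part $I$ are used purely in $V$, only to license Lemma \ref{rcdense} and Corollary \ref{IPtransfer}, whose conclusions are absolute and hence survive the passage back into $M$.
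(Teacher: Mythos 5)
Your proposal is correct and follows essentially the same route as the paper: pass to the supported subfield $\text{RC}(a)$, obtain an integer part of it via Lemma \ref{rigidIP}, and lift it to $K$ using Lemma \ref{rcdense} and Corollary \ref{IPtransfer}. If anything, your version is slightly more careful than the paper's terse ``work in $M$'': you correctly observe that the hypotheses of Lemma \ref{rcdense} (homogeneity of $K$, the existence of $I$ with support $a$) need only be invoked in $V$, with only the absolute conclusion --- density of $\text{RC}(a)$ in $K$ --- transferred into $M$.
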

\begin{proof}
Let $M\models ZF$ be transitive, $K\in M$. Work in $M$, noting that, by our remark above, everything we use is provable in $ZF$ alone and hence holds in $M$.
By Lemma \ref{rcdense}, $\text{RC}(a)$ is dense in $K$. Obviously, $\text{RC}(a)$ is supported. Hence, by Lemma \ref{rigidIP}, $\text{RC}(a)$ has an integer part $J$.
By Lemma \ref{IPtransfer}, $J$ is also an integer part of $K$. Hence $K$ has an integer part in $M$.
\end{proof}

The proof actually gives us the following:

\begin{corollary}
($ZF$) Let $K$ be an $RCF$, $a\subseteq K$ finite such that $\text{RC}(a)$ is dense in $K$. Then $K$ has an integer part.
In particular, if there is a finite $a\subseteq K$ such that $\text{RC}(a)$ contains an integer part of $K$ in any transitive model of $ZFC$, then $K$
has an integer part in every model of $ZF$ containing $K$.
\end{corollary}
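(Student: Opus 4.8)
The plan is to read off both assertions from the proof of Theorem~\ref{supportssuffice}, the only difference being that homogeneity was used there solely to deduce, via Lemma~\ref{rcdense}, that $\text{RC}(a)$ is dense in $K$; here that density is either assumed outright (first assertion) or derived directly from the hypothesis (second assertion).

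For the first (purely $ZF$) assertion: since $a$ is finite, $\text{RC}(a)$ has finite transcendence degree over $\mathbb{Q}$, i.e.\ it is supported, so Lemma~\ref{rigidIP} gives --- provably in $ZF$ --- an integer part $J$ of $\text{RC}(a)$. As $\text{RC}(a)$ is dense in $K$ by hypothesis, Corollary~\ref{IPtransfer}, whose proof uses only this density and is carried out in $ZF$, yields that $J$ is also an integer part of $K$.

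For the ``in particular'': suppose $a \subseteq K$ is finite and, in some transitive $W \models ZFC$ with $K \in W$, there is $I \subseteq \text{RC}(a)$ that $W$ regards as an integer part of $K$. Since being an integer part is absolute (Lemma~\ref{absoluteness}(d)) and $\text{RC}(a) = \text{dcl}_K(a)$ is itself absolute, $I$ really is an integer part of $K$, so $\text{RC}(a)$ is unbounded in $K$ (any integer part is: for $x \in K$, the $i \in I$ with $i \le x+1 < i+1$ has $i > x$). Now run the scaling step from the proof of Lemma~\ref{rcdense}: were $\text{RC}(a)$ not dense in $K$, pick an interval $(x,y)$, WLOG $0 < x < y$, disjoint from $\text{RC}(a)$, choose $d \in \text{RC}(a)$ with $d(y-x) > 1$, note that then $\text{RC}(a) \cap (dx,dy) = \emptyset$, and derive a contradiction from the fact that the interval $(dx,dy)$ of length $> 1$ must meet the integer part $I \subseteq \text{RC}(a)$. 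This argument is choice-free, so density of $\text{RC}(a)$ in $K$ holds in $V$; being absolute (Lemma~\ref{absoluteness}(c)) it then holds in every transitive $M \models ZF$ containing $K$, and applying the first assertion inside $M$ produces an integer part of $K$ there.

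I expect no real obstacle; the one point requiring care is the absoluteness bookkeeping --- that $\text{RC}(a)$, the density statement, and ``is an integer part'' all transfer between the relevant transitive models (all covered by Lemma~\ref{absoluteness}) --- together with checking that the unboundedness-to-density scaling argument is genuinely $ZF$, so that the conclusion of the first assertion may legitimately be invoked inside an arbitrary $M \models ZF$.
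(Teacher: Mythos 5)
Your proof is correct, and for the first assertion it is essentially the paper's argument; the only difference is in routing. The paper's proof concludes ``by Theorem~\ref{supportssuffice}'', even though that theorem's hypotheses (a homogeneous $K$ with an integer part having support $a$) are not literally available here; what is really being used is the inner chain of that proof, namely Lemma~\ref{rigidIP} applied to the supported field $\text{RC}(a)$ followed by Corollary~\ref{IPtransfer}, whose proof indeed uses only density. You invoke exactly that chain directly, which is the cleaner and strictly correct reading. For the ``in particular'' clause the paper gives no separate argument at all, whereas you supply one: you derive density of $\text{RC}(a)$ from the hypothesis that it contains an integer part $I$ of $K$, by rerunning the scaling step of Lemma~\ref{rcdense} but replacing its final automorphism/homogeneity step with the observation that an interval of length greater than $1$ disjoint from $\text{RC}(a)\supseteq I$ cannot meet $I$ --- a contradiction that needs no homogeneity and no choice. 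This is a genuine improvement in completeness: it makes the second assertion follow for arbitrary (not necessarily homogeneous) $K$, which is what the statement claims. Your absoluteness bookkeeping (Lemma~\ref{absoluteness} for density and for being an integer part, plus the absoluteness of $\text{RC}(a)$ as the relative algebraic closure of $\mathbb{Q}(a)$ in $K$) matches what the paper does implicitly. I see no gap.
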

\begin{proof}
Let $M\models ZF$ be transitive such that $K\in M$. Then $^{<\omega}K\in M$, and hence $a\in M$. The subfield $k$ of $K$ generated by $a$
exists and is ordered in $M$, so $\text{RC}(a)\in M$. By absoluteness of density (see Lemma \ref{absoluteness}), $\text{RC}(a)$ is dense in $K$
also in $M$. Hence, by Theorem \ref{supportssuffice}, $M$ believes that $K$ has an integer part.
\end{proof}

In the countable case, we can summarize these results as follows:

\begin{thm}{\label{choicefreeIPchar}}
Let $K$ be a countable, $\omega$-homogenous $RCF$. Then $K$ has an integer part in any transitive $M\models ZF$ with $K\in M$
iff there is an integer part of $K$ which has a support.
\end{thm}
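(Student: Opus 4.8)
The plan is to prove the two implications of the equivalence separately; each reduces quickly to a result already established.

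For the direction from right to left, suppose there is an integer part $I$ of $K$ which has a support $a$. Since $K$ is countable and $\omega$-homogenous, Lemma~\ref{homauto} applies, so any two finite tuples of $K$ with the same type are related by an automorphism of $K$; this is precisely the homogeneity property invoked in Lemma~\ref{rcdense} and Theorem~\ref{supportssuffice}. Theorem~\ref{supportssuffice} then states exactly that every transitive model of $ZF$ containing $K$ contains an integer part of $K$, which is the desired implication. (One can also see this by hand: in a transitive $M\models ZF$ with $K\in M$ one has $a\in M$, hence $\text{RC}(a)\in M$ since the subfield generated by $a$ is ordered in $M$; by Lemma~\ref{rcdense}, which is proved in $ZF$, $\text{RC}(a)$ is dense in $K$; by Lemma~\ref{rigidIP} it has an integer part $J$ belonging to $M$; and by Corollary~\ref{IPtransfer}, $J$ is an integer part of $K$.)

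For the direction from left to right I argue by contraposition: assuming that no integer part of $K$ has a support, I produce a transitive model of $ZF$ containing a copy of $K$ with no integer part. Viewing $K$ as a structure for the (countable) language of ordered rings, apply Lemma~\ref{subsetremoval} to obtain a transitive $N\models ZF$ containing an $N$-symmetric isomorphic copy $K'$ of $K$, and fix an isomorphism $\sigma\colon K\to K'$ in $V$. By Lemma~\ref{absoluteness}(a), $N\models$ `$K'$ is a real closed field'. Suppose for contradiction that $N$ contains an $I'$ that $N$ regards as an integer part of $K'$. Then $I'\in\mathfrak{P}^{N}(K')$, so by $N$-symmetry $I'$ has a support $s\subseteq K'$, where --- as throughout this paper --- `support' refers to automorphisms of $K'$ computed in $V$. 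By Lemma~\ref{absoluteness}(d), $I'$ is an integer part of $K'$ in $V$; transporting along $\sigma$, the set $\sigma^{-1}[I']$ is an integer part of $K$ with support $\sigma^{-1}[s]$, contradicting the hypothesis. Hence $N$ contains no integer part of $K'$, so by the absoluteness of the relevant notions (Lemma~\ref{absoluteness}) $N\models$ `$K'$ is a real closed field without an integer part'. Under the usual identification of $K$ with its isomorphic copy (see the remarks preceding Definition~\ref{Msymmetric}), this is the failure of the left-hand side, completing the contrapositive.

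Combining the two directions yields the stated equivalence. The argument is essentially bookkeeping on top of Theorem~\ref{supportssuffice} and Lemma~\ref{subsetremoval}; the points that need a little care are the passage between $K$ and the copy $K'$ delivered by Lemma~\ref{subsetremoval} --- here one uses that ``having a support'' is phrased in terms of automorphisms in $V$ and so is preserved by $\sigma$, and that being a real closed field and being an integer part are absolute between transitive models of $ZF$ --- together with the verification that $\omega$-homogeneity plus countability is enough to trigger the homogeneity hypothesis of Theorem~\ref{supportssuffice} via Lemma~\ref{homauto}. I do not expect any genuine obstacle beyond this.
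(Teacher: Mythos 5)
Your proposal is correct and follows exactly the paper's decomposition: the ``if'' direction is Theorem~\ref{supportssuffice} (with the observation that countability plus $\omega$-homogeneity yields the needed homogeneity via Lemma~\ref{homauto}), and the ``only if'' direction is the contrapositive application of Lemma~\ref{subsetremoval} together with the absoluteness facts of Lemma~\ref{absoluteness}. The paper states this in one line; you have merely written out the same bookkeeping in full.
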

\begin{proof}
The `if' part is Theorem \ref{supportssuffice}, the `only if' part is a direct application of Lemma \ref{subsetremoval}.
\end{proof}

\section{Remarks on Effectivity}

In \cite{H2}, W. Hodges analyzes certain field constructions in terms of `effectivity', where effectivity is taken in a weaker sense than Turing computability;
instead, he uses the primitive recursive set functions of Jensen and Karp as his underlying model of effectiveness. In this sense, our results above
allow us to argue that the construction of an integer part for a real closed field is not effective.

An account of primitive recursive set functions can be found in \cite{JK}. We recall here the definition from \cite{JK}:

\begin{defini}
A function $F$ is primitive recursive, written Prim, iff it lies in the closure of the following basic functions:
\begin{itemize}
 \item The projections $P_{n,i}$ taking $n$-tuples to their $i$th components ($1\leq i\leq n$)
 \item $F(x)=0$
 \item $F(x,y)=x\cup\{y\}$
 \item $C(x,y,u,v)=x$ if $u\in v$ and otherwise $=y$
\end{itemize}
under the following operators:
\begin{itemize}
 \item (Substitution $1$) $F(\vec{x},\vec{y})=G(\vec{x},H(\vec{x}),y)$, where $\vec{x}=(x_{1},...,x_{n})$, $\vec{y}=(y_{1},...,y_{m})$ and $m,n\in\omega$
 \item (Substitution $2$) $F(\vec{x},\vec{y})=G(H(\vec{x},\vec{y})$ (with $\vec{x},\vec{y}$ as above)
 \item (Recursion) $F(z,\vec{x})=G(\bigcup\{F(u,\vec{x}|u\in z\},z,\vec{x})$, $\vec{x}=(x_{1},...,x_{n})$, $n\in\omega$
\end{itemize}
\end{defini}

This definition is obviously analogous to the classical definition of primitive recursion when formulated for heriditarily finite sets. Restricting primitive recursive set functions to the ordinals leads to the class of 
ordinal recursive functions, which have been linked with machine models
of transfinite computations e.g. in \cite{IS} and turned out to have a considerable amount of conceptual stability that one would expect from a sensible notion of transfinite computability.
The proposal motivated in the first section \cite{H2} which we follow here is to model the intuitive concept of an effective construction via primitive recursive set functions.

%(Siehe Hodges, S. 145: Erstens hat nicht in jedem $ZF$-Modell jeder $RCF$ einen $IP$, zweitens ist $IP$-Sein absolut (der zweite Schritt ist n\"otig,
%weil die Prozedur ja etwas im Gegenmodell $N$ konstruieren k\"onnte, was von $N$ nicht f\"ur einen $IP$ gehalten wird, tats\"achlich aber einer ist).

\begin{lemma}{\label{prsfabs}}
Primitive recursive set functions are absolute between transitive models of $ZF$: I.e., if $\phi(x,y)$ is the defining formula for a Prim function $F$, $M_{1},M_{2}$ are transitive models of $ZF$
and $x,y\in M_{1}\cap M_{2}$, then $F(x)=y$ holds in $M_{1}$ iff it holds in $M_{2}$. Moreover, transitive models of $ZF$ are closed under Prim functions.
\end{lemma}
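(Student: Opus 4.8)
\textbf{Proof plan for Lemma \ref{prsfabs}.}

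The plan is to prove the two assertions by a simultaneous structural induction on the construction of the Prim function $F$, following the standard absoluteness arguments for rudimentary-type functions (as in Jech \cite{J1}, Chapter 13, or Barwise). First I would make precise what ``$F(x)=y$ holds in $M$'' means: each Prim function is given by a $\Sigma_1$-formula $\phi_F(x,y)$ obtained in the obvious way from its defining term, and the induction hypothesis is that for every transitive $M\models ZF$ and all $x\in M$ there is a unique $y\in M$ with $M\models\phi_F(x,y)$, and that this $y$ agrees across any two such models. The base cases are immediate: the projections $P_{n,i}$, the zero function $F(x)=0$, the adjunction $F(x,y)=x\cup\{y\}$, and the conditional $C(x,y,u,v)$ are all defined by $\Delta_0$-formulas whose output is built from $x,y$ by operations ($\cup$, $\{\cdot\}$, $\in$-test) that are absolute between transitive models and whose outputs lie in any transitive model containing the inputs (here one uses that a transitive model of $ZF$, indeed of a weak fragment, is closed under pairing and union). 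So the base functions are total and absolute, and transitive models of $ZF$ are closed under them.

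Next I would handle the two substitution schemes. These are the easy inductive steps: if $G$ and $H$ are absolute and total on transitive models of $ZF$, then the composite $F(\vec x,\vec y)=G(\vec x,H(\vec x),\vec y)$ (resp. $G(H(\vec x,\vec y))$) is obtained by plugging one $\Sigma_1$-definition into another; totality and closure are inherited because the intermediate value $H(\vec x)$ lies in $M$ by the induction hypothesis, and then $G$ can be applied to it inside $M$; absoluteness is inherited because composing formulas that are absolute yields an absolute formula (one has to be slightly careful since the natural syntactic rendering is $\Sigma_1$ rather than $\Delta_0$, but uniqueness of the intermediate and final values upgrades the $\Sigma_1$-definition to a $\Delta_1$ one provably in $ZF$, which is what absoluteness between transitive models of $ZF$ requires).

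The main obstacle is the recursion scheme $F(z,\vec x)=G\bigl(\bigcup\{F(u,\vec x)\mid u\in z\},z,\vec x\bigr)$; this is where the real content lies. The plan is the usual one: fix a transitive $M\models ZF$ and $z,\vec x\in M$. Working in $M$, define by $\in$-recursion on $z$ (legitimate in $ZF$) the function $g$ with $g(u)=G\bigl(\bigcup\{g(v)\mid v\in u\},u,\vec x\bigr)$ for $u\in\operatorname{tc}(\{z\})$; the existence of $g$ as a set in $M$ follows from the $ZF$ recursion theorem together with the induction hypothesis that $G$ is total, and then $F(z,\vec x):=g(z)\in M$. This shows totality and closure. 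For absoluteness, given a second transitive $N\models ZF$ with $z,\vec x\in N$, one shows by $\in$-induction on $u\in\operatorname{tc}(\{z\})$ that $g^M(u)=g^N(u)$: the step uses that $u\in M\cap N$ (both are transitive), that the set $\{g^M(v)\mid v\in u\}=\{g^N(v)\mid v\in u\}$ by the inductive hypothesis and lies in both models, that $\bigcup$ is absolute, and that $G$ is absolute by the outer induction hypothesis. Here one must take care that the graph of the recursively defined $g$ is given by a $\Sigma_1$-formula that $ZF$ proves functional, so that ``$F(z,\vec x)=y$'' is genuinely $\Delta_1^{ZF}$ and hence absolute between transitive models of $ZF$ — this is the one place where closure under $ZF$-provable recursion, not merely $\Delta_0$-absoluteness, is essential, and it is why the lemma is stated for models of $ZF$ rather than arbitrary transitive classes. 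Assembling the three cases completes the induction and proves both clauses of the lemma. \qedhere
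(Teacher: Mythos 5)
Your argument is correct, but note that the paper does not actually prove this lemma at all: it simply cites Remark 2.3, part (4) of Jensen--Karp \cite{JK}. What you have written is, in effect, a reconstruction of the standard argument that the citation encapsulates, and it is sound: the simultaneous induction on the generation of $F$, the observation that the base functions are $\Delta_0$-definable with outputs in any transitive model containing the inputs, the inheritance of totality and absoluteness under the two substitution schemes via the $\Sigma_1$-to-$\Delta_1$ upgrade from provable functionality, and the treatment of the recursion scheme by $\in$-recursion on $\operatorname{tc}(\{z\})$ together with an $\in$-induction showing $g^{M}=g^{N}$ on the common transitive closure are all exactly the right ingredients. Your remark that the recursion case is the one place genuinely requiring a theory strong enough to prove the recursion theorem (rather than mere $\Delta_0$-absoluteness between transitive classes) correctly identifies why the lemma is stated for models of $ZF$; in fact Jensen and Karp get by with much less than full $ZF$, so your proof is if anything slightly less economical than the cited source, but nothing in it is wrong or missing for the purposes of this paper.
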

\begin{proof}
 See Remark $2.3$, part (4) of \cite{JK}.
\end{proof}

\begin{thm}{\label{effectiveIP}}
(1) There is no primitive recursive set function that maps each $RCF$ $K$ to an integer part of $K$.\\
(2) There is no primitive recursive set function that maps each $RCF$ $K$ to a value group section of $K$.\\
(3) There is no primitive recursive set function that maps each $RCF$ $K$ to a residue field section of $K$.
\end{thm}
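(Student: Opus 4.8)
The plan is to combine the absoluteness of primitive recursive set functions (Lemma \ref{prsfabs}) with the models of $ZF$ built in Section $4$ that contain a real closed field lacking the relevant substructure. I would treat (1) in detail; parts (2) and (3) are the same argument verbatim, with Corollary \ref{IndependenceOfIPs} replaced by Corollary \ref{vgindep} and by Theorem \ref{rfswithoutAC} respectively, and with clause (d) of Lemma \ref{absoluteness} replaced by clauses (e) and (f).

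Suppose toward a contradiction that $F$ is a primitive recursive set function such that $F(K)$ is an integer part of $K$ for every real closed field $K$. By Corollary \ref{IndependenceOfIPs} there is a transitive model $M\models ZF$ containing a real closed field $K'$ with no integer part in $M$. First I would invoke Lemma \ref{prsfabs}: transitive models of $ZF$ are closed under primitive recursive set functions and $K'\in M$, so there is $z\in M$ with $M\models F(K')=z$, and by the absoluteness clause of the same lemma this $z$ is the genuine value $F(K')$ computed in $V$. Next, since ``$K'$ is a real closed field'' is absolute (Lemma \ref{absoluteness}(a)) and $M$ thinks $K'$ is a real closed field, $K'$ really is one, so by the hypothesis on $F$ the set $z=F(K')$ is an integer part of $K'$ in $V$. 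Finally, ``$z$ is an integer part of $K'$'' is a $\Delta_{0}$ statement with parameters $z,K'\in M$, hence absolute between $V$ and the transitive model $M$ (Lemma \ref{absoluteness}(d)); therefore $M\models$``$z$ is an integer part of $K'$'', contradicting the choice of $M$. So no such $F$ exists, which is the intended strengthening of the non-constructiveness statements of Section $4$: the existence of integer parts (and, mutatis mutandis, of value group sections and residue field sections) cannot be witnessed by any primitive recursive set function.

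I do not expect a serious obstacle here, since every ingredient is already in place: the only point requiring care is applying the two absoluteness lemmas to the correct pair of transitive models --- namely $V$ and the countable transitive $M$ furnished via Lemma \ref{subsetremoval} --- and verifying that $F(K')$ actually lands inside $M$, which is exactly the ``closure under Prim functions'' clause of Lemma \ref{prsfabs}, so that $M$ is forced to recognise $F(K')$ as an integer part of $K'$. No computation is involved; the whole proof is this absoluteness-plus-contradiction schema carried out three times.
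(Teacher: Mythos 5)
Your proposal is correct and is essentially the paper's own argument: both take the transitive model $M$ from Section 4 containing an $RCF$ without the relevant substructure, use Lemma \ref{prsfabs} to conclude $F(K')=F^{M}(K')\in M$, and then use the absoluteness clauses of Lemma \ref{absoluteness} to derive a contradiction with the choice of $M$. Your write-up is merely more explicit about which absoluteness clause is applied at each step; no substantive difference.
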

\begin{proof}
(1) Take a model $M$ in which there is some $RCF$ $K$ without an $IP$. If such a function $f$ existed, it would be absolute by Lemma \ref{prsfabs}, so we would have $f(K)=f^{M}(K)\in M$.
As being an $IP$ is absolute between transitive models of $ZF$, $f(K)$ would be an $IP$ for $K$ inside $M$, which contradicts the choice of $M$.\\
(2) and (3) now follow by similar arguments, using the results of section $4.1$.
\end{proof}

Theorem \ref{effectiveIP} can be seen as complementing the results of \cite{DKKL} and \cite{KL} on the effectivity of integer parts and value group sections. They show that
the existence of integer parts and value group sections is highly nonconstructive unless the necessity of choice is eliminated by extra information. The results of \cite{DKKL} and \cite{KL} show, in
contrast, that the existence becomes highly constructive once this is done.

\section{Well-orderable real closed fields}

How much choice is actually necessary for obtaining value group sections, residue field sections and integer parts for a real closed field $K$? In this section, we give a partial answer by observing
that $ZF$ suffices to show that each well-orderable real closed field has a value group section. To this end, we generalize the arguments from
\cite{KL} dealing with the case of countable real closed fields to higher cardinalities. 

\begin{lemma}{\label{ZFWOvgs}}
($ZF$) Let $K$ be a real closed field and $\preceq$ a well-ordering of $K$. Then $K$ has a residue field section $S$.
\end{lemma}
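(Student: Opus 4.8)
The plan is to build the residue field section by transfinite recursion along the given well-ordering $\preceq$ of $K$, adding one generator at a time and closing off to keep everything a real closed subfield. First I would fix the valuation ring $F$, the maximal ideal $\mu$ of infinitesimals, and the residue field $R=F/\mu$, together with the canonical projection $\mathrm{res}:F\to R$; all of these exist in $ZF$ since they are definable over $K$. Enumerate $K$ as $(k_\xi)_{\xi<\kappa}$ in the order given by $\preceq$. I would then recursively construct an increasing chain $(S_\xi)_{\xi<\kappa}$ of real closed subfields of $F$ such that $\mathrm{res}\restriction S_\xi$ is injective and such that at stage $\xi$ the element $\mathrm{res}(k_\xi)$ (if $k_\xi\in F$) lies in $\mathrm{res}[S_{\xi+1}]$. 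The final section is $S=\bigcup_{\xi<\kappa}S_\xi$, whose residue-image will be all of $R$ because every element of $R$ is $\mathrm{res}(k_\xi)$ for some $\xi$; injectivity of $\mathrm{res}\restriction S$ follows from injectivity at each stage, since the $S_\xi$ form a chain.

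The heart of the recursion is the successor step. Suppose $S_\xi$ is built and $k_\xi\in F$. If $\mathrm{res}(k_\xi)$ is already algebraic over $\mathrm{res}[S_\xi]$ inside $R$, then — using that $R$ is real closed (being the residue field of an $RCF$) and that $\mathrm{res}\restriction S_\xi$ is an ordered-field isomorphism onto $\mathrm{res}[S_\xi]$ — there is a unique element $s\in\mathrm{RC}(S_\xi)\subseteq F$ with $\mathrm{res}(s)=\mathrm{res}(k_\xi)$ and the right sign behaviour, and I set $S_{\xi+1}=\mathrm{RC}(S_\xi)$ (the real closure computed inside $K$, which exists in $ZF$ because $K$ is ordered, by the remark in the introduction). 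If instead $\mathrm{res}(k_\xi)$ is transcendental over $\mathrm{res}[S_\xi]$, then $k_\xi$ is transcendental over $S_\xi$ (a residue-transcendental element cannot be algebraic over $S_\xi$), so I adjoin $k_\xi$: set $S_{\xi+1}=\mathrm{RC}(S_\xi(k_\xi))$. Here one must check that $\mathrm{res}$ stays injective on $S_{\xi+1}$; this is the standard valuation-theoretic fact that adjoining a residue-transcendental generator is a residue field extension of the same value group, so no new collisions in $F/\mu$ are created, and then the real closure adds only residue-algebraic elements matched uniquely as in the previous case. At limit stages I take unions, which preserves all the properties since a union of a chain of real closed subfields of $K$ with injective residue maps has the same properties.

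The main obstacle is purely the choice-free bookkeeping: I must verify that each step — forming $S_\xi(k_\xi)$, taking real closures inside $K$, taking unions — can be carried out by an explicit operation on sets definable from $K$ and $\preceq$, so that the whole recursion is a legitimate $ZF$ transfinite recursion with no hidden appeal to $AC$. The real closure of an ordered subfield of $K$ is the relative algebraic closure (i.e. $\mathrm{RC}(A)=\mathrm{dcl}_K(A)$ by the remark before Lemma \ref{pe}), which is definable from $A$ and $K$, so each $S_\xi$ is obtained from $S_0=\mathrm{RC}(\mathbb{Q})$ and the parameter $k_\xi$ by a definable operation; since $\preceq$ well-orders $K$, the sequence $(S_\xi)_{\xi<\kappa}$ is defined by recursion on $\kappa$ without choice, and $S=\bigcup_\xi S_\xi$ is the desired residue field section. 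The only genuinely substantive verification is the valuation-theoretic claim that adjoining a residue-transcendental element does not merge two $\sim_\mu$-classes and that every residue-algebraic element over an already-embedded subfield has a unique sign-correct lift — both are routine facts about real closed valued fields, and I would cite the relevant lemmas from \cite{KL} (which are proved there in the countable case by exactly this kind of argument) rather than reprove them.
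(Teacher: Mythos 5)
Your argument is essentially correct, but the most important thing to point out is that it diverges from the paper for a reason that lies in the paper rather than in your proof: the proof printed under Lemma \ref{ZFWOvgs} does not construct a residue field section at all. It builds, by transfinite recursion along $\preceq$, an increasing chain of divisible subgroups $G_{\iota}$ of $(K^{>0},\cdot)$, adjoining the rational powers of $x_{\alpha}$ whenever $x_{\alpha}$ is not archimedean equivalent to anything already present, and checks that the union meets each archimedean class exactly once --- that is, it proves that a well-orderable $RCF$ has a \emph{value group section}, consistently with the section's introduction and with the lemma's label; the residue field section case is only deferred to the subsequent Remark as an adaptation of Theorem $4.1$ of \cite{KL}. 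Your proposal is in effect a worked-out version of that Remark: you run the analogous recursion on archimedean real closed subfields of the valuation ring instead of on divisible multiplicative subgroups, adjoining $k_{\xi}$ when $\mathrm{res}(k_{\xi})$ is transcendental over $\mathrm{res}[S_{\xi}]$ and closing under relative algebraic closure inside $K$. The verifications you defer are indeed routine and choice-free: injectivity of $\mathrm{res}$ on a subfield of $F$ is exactly archimedeanity of that subfield, the Gauss-valuation computation shows that $S_{\xi}(k_{\xi})$ stays archimedean when the generator is residue-transcendental, and the relative algebraic closure of an archimedean ordered subfield of $K$ is archimedean by uniqueness of real closures.

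Two small corrections to your successor step. First, in the residue-algebraic case nothing needs to be added: since $S_{\xi}$ is already real closed and $\mathrm{res}\restriction S_{\xi}$ is an ordered-field isomorphism onto its image, $\mathrm{res}[S_{\xi}]$ is a real closed subfield of $R$ and hence relatively algebraically closed in $R$, so $\mathrm{res}(k_{\xi})$ already lies in $\mathrm{res}[S_{\xi}]$; there is no separate ``sign-correct lift'' to locate, and $\mathrm{RC}(S_{\xi})=S_{\xi}$. Second, you should say explicitly that $S_{\xi+1}:=S_{\xi}$ when $k_{\xi}\notin F$. With these adjustments your argument proves the statement as literally printed; the comparison suggests the paper should either restate the lemma as being about value group sections (matching its proof) or replace the printed proof by an argument along your lines.
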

\begin{proof}
 We adapt the proof of Theorem $3.1$ from \cite{KL}. Let $K$ be a real closed field, $K=(x_{\iota}|\iota<\kappa)$ where $\kappa$ is a cardinal and where $x_{0}=1$.
We define a sequence $(G_{\iota}|\iota<\kappa)$ of divisible abelian subgroups of $(K^{>0},\cdot)$ via transfinite recursion on $\kappa$; as the transfinite recursion principle
is provable in $ZF$, this will give the desired conclusion in $ZF$. Let $G_{0}:=\{1\}$. If $\alpha$ is a limit ordinal, then $G_{\alpha}:=\bigcup_{\iota<\alpha}G_{\iota}$.
If $\beta=\alpha+1$ is a successor ordinal, then $G_{\beta}=G_{\alpha}$ iff some element of $G_{\alpha}$ is archimedean equivalent to $x_{\alpha}$. If this is not the case,
we let $G_{\beta}$ be the subgroup of $(K,\cdot)$ generated by $G_{\alpha}\cup\{x_{\alpha}^{q}|q\in\mathbb{Q}\}$. In any case, $G_{\alpha}$ is easily seen to be a divisible abelian group.
Finally, we let $G:=\bigcup_{\iota<\kappa}G_{\iota}$. As an increasing union of divisible abelian groups, $G$ is a divisible abelian group. We show that $G$ contains a unique representative
for each archimedean equivalence class of $K$. Let $x\in K$ be arbitrary and suppose that no element of $G$ is archimedean equivalent to $x$. There is $\iota<\kappa$ such that $x=x_{\iota}$. As $x\notin G$,
we have $x=x_{\iota}\notin G_{\iota}\subseteq G$. Hence by definition $x_{\iota}\in G_{\iota+1}\subseteq G$, a contradiction. Assume now that $x,y\in G$ are archimedean equivalent. Suppose first
that $x,y$ entered $G$ simultaneously at stage $\iota$ of the construction and that $\iota$ is the minimal stage after which we have two archimedean equivalent elements in $G$.
 Hence there are $a,b\in G_{\iota}$ different from $0$ and $q_{1},q_{2}\in\mathbb{Q}$ such that $x=ax_{\iota}^{q_{1}}$ and
$y=bx_{\iota}^{q_{2}}$. If $q_{1}=q_{2}=q$, then $ax_{\iota}^{q}$ and $bx_{\iota}^{q}$ are archimedean equivalent, hence so are $a$ and $b$, which contradicts the minimality of $\iota$.
So let $q_{1}\neq q_{2}$, and assume without loss of generality that $q_{1}>q_{2}$. Then $x_{\iota}^{q_{1}-q_{2}}$ and $ba^{-1}\in G_{\iota}$ are archimedean equivalent, hence so are
$x_{\iota}$ and $(ba^{-1})^{q_{2}-q_{1}}$, so no new element would have entered $G$ at stage $\iota$, which again contradicts the minimality of $\iota$. 
Assume now that $x$ enters at stage $\iota_{1}$ and $y$ enters at stage $\iota_{2}$ where $\iota_{1}\neq \iota_{2}$, and assume without loss of generality that $\iota_{2}>\iota_{1}$.
Then there are $a\in G_{\iota_{2}}$ and $q\in\mathbb{Q}$ such that $x$ and $y=ax_{\iota_{2}}^{q}$ are archimedean equivalent; consequently, so are $x_{\iota_{2}}$ and
$(xa^{-1})^{-q}\in G_{\iota_{1}+1}\subseteq G_{\iota_{2}}$, so no new element would have entered at stage $G_{\iota_{2}}$, once more contradicting the choice of $\iota_{2}$.
\end{proof}

\textbf{Remark}: The proof of Theorem $4.1$ of \cite{KL} showing that some residue field section of a countable real closed field $K$ is $\Pi_{2}^{0}$ in $K$ can be adapted in a similar manner
to show that for an arbitrary $K$, a residue field section of $K$ is constructible in $K$ and a well-ordering of $K$, i.e. if $\preceq$ is a well-ordering of $K$, then it is provable in $ZF$ that
there is a residue field section $S$ of $K$.
The case of integer parts is considerably more involved; careful inspection of the proofs of \cite{DKKL} will probably allow one to show that an integer part of a real closed field $K$
is constructible relative to a well-ordering, a value group section and a residue field section of $K$ and hence relative to a well-ordering of $K$ alone by Theorem \ref{ZFWOvgs}
and the first part of this remark, but we will not pursue this further here.

%Another way to approach the question how much choice is necessary for the existence of value group sections for real closed fields is to ask whether a well-ordering of the real closed field
%is sufficient, i.e. whether it is true in $ZF$ that every well-orderable real closed field has a value group section or an integer part.
%AUCH: IP BRAUCHEN VALUE GROUP SECTION. GILT IN ZF, DASS JEDER WOHLORDENBARE RCF EINEN IP/EINE VALUE GROUP SECTION HAT?
%Schauen, ob die Strategie aus Knight-Lange (Theorem 3.1) das für den fall der value group sections erledigt. die antwort ist dann für IPs noch immer nicht klar, denn die
%brauchen eine residue field section, und für die scheint man Zorn's lemma für untergruppen des RCF zu brauchen - ob das folgt? aber siehe Theorem 4.1 aus dem paper...
\section{Conclusion, Open Questions and Further Work}

We have seen that there are examples of real closed fields that have neither an integer part nor a value group section or residue field sections in some transitive model of $ZF$ containing them.
This shows in particular that in the analysis of `effective' methods for constructing integer parts, value group sections and residue field sections of given real closed fields
as in \cite{DKL} or \cite{DKKL}, it is indeed necessary in general, as is done there, to cancel out the use of $AC$ by e.g. fixing a well-ordering of the real closed field. 
In contrast, we also showed that this additional assumption is unnecessary when the field in question has finite transcendence degree over $\mathbb{Q}$.\\

We do not know how strong a choice principle $\phi_{IP}$ (or the existence of value group sections or residue field sections) for real closed fields really is. In particular, we do not know whether $\phi_{IP}$ actually implies $AC$, 
(though we conjecture that it does not) or some weakening of $AC$ and how much of $AC$ is necessary for $\phi_{IP}$. Moreover, it would be interesting to see
whether e.g. $ZF+AD$ (i.e. $ZF$ with the axiom of determinacy, see e.g. \cite{J1}) implies $\phi_{IP}$.

\section{Acknowledgements}
We thank Pantelis Eleftheriou for suggesting Lemma \ref{pe}. We also thank Lorna Gregory for commenting on an earlier version of this paper and Arno Fehm for a helpful discussion of our results.

%\section{Ethical Statement}

%No humans, animals or sets were harmed in this research.\\
%Conflict of Interest: The author declares that he has no conflict of interest.

\end{document}